\def\eoe{\unskip\ \hglue0mm\hfill$\diamond$\smallskip\goodbreak}
\def\th@plain{%
  \thm@notefont{}
  \itshape 
}
\def\th@definition{%
  \thm@notefont{}
  \normalfont 
}
\newcommand{\calF}{{\mathcal{F}}}
\newcommand{\calK}{{\mathcal{K}}}
\newcommand{\calO}{{\mathcal{O}}}
\newcommand{\calP}{{\mathcal{P}}}
\newcommand{\calQ}{{\mathcal{Q}}}
\newcommand{\calR}{{\mathcal{R}}}
\newcommand{\CC}{{\mathbb{C}}}  
\newcommand{\PP}{{\mathbb{P}}}  
\newcommand{\RR}{{\mathbb{R}}}  
\renewcommand{\SS}{{\mathbb{S}}} 
\newcommand{\ZZ}{{\mathbb{Z}}}  
\newcommand{\depth}{{\operatorname{depth}}} 
\newcommand{\Stab}{{\operatorname{Stab}}} 
\renewcommand{\O}{{\operatorname{O}}} 
\newcommand{\CIN}{{C^\infty}}   
\newcommand{\hypref}[2]{{\hyperref[#1]{#2~\ref{#1}}}}
\newcommand{\ifwork}[1]{\ifthenelse{\boolean{workmode}}{#1}{}}
\newcommand{\comment}[1]{}
\newcommand{\mute}[1]{}
\newcommand{\printname}[1]{}
\renewcommand{\comment}[1]{{\marginpar{*}\ \scriptsize{#1}\ }}
\renewcommand{\printname}[1]
    {\smash{\makebox[0pt]{\hspace{-1.0in}\raisebox{8pt}{\tiny #1}}}}
\newcommand{\labell}[1] {\label{#1} \printname{#1}}
\newcommand{\ifsection}[2]{\ifthenelse{\boolean{sections}}{#1}{#2}}
\theoremstyle{plain}
    \newtheorem{theorem}{Theorem}[section]
    \newtheorem{theorem}{Theorem}
\newtheorem{proposition}[theorem]{Proposition}
\newtheorem{corollary}[theorem]{Corollary}
\newtheorem{lemma}[theorem]{Lemma}
\theoremstyle{definition}
\newtheorem{definition}[theorem]{Definition}
\newtheorem{example}[theorem]{Example}
\newtheorem{remark}[theorem]{Remark}
\newtheorem*{corollary*}{Corollary}
\title{Orbit Spaces of Linear Circle Actions}
\author{Suzanne Craig}
\address{Department of Mathematics, University of Colorado Boulder, Boulder, Colorado, USA 80309}
\email{Suzanne.Craig@colorado.edu}
\author{Naiche Downey}
\address{University of Colorado Boulder, Boulder, Colorado, USA 80309}
\email{Naiche.Downey@colorado.edu}
\author{Lucas Goad}
\address{University of Colorado - Colorado Springs, Colorado Springs, Colorado, USA 80918}
\email{lgoad@uccs.edu}
\author{Michael J.\ Mahoney}
\address{122 E Bayaud Ave, Denver, Colorado, USA 80209}
\email{Michael.J.Mahoney@colorado.edu}
\author{Jordan Watts}
\address{Department of Mathematics, Central Michigan University, Mt. Pleasant, Michigan, USA 48859}
\email{jordan.watts@cmich.edu}
\date{\today}
\begin{document}

\begin{abstract}
In this paper, it is shown that non-isomorphic effective linear circle actions yield non-diffeomorphic differential structures on the corresponding orbit spaces.
\end{abstract}

\maketitle

\section{Introduction}\label{s:intro}

Recall that an orbifold is a topological space equipped with an atlas of linear representations of finite groups; in the case that all of these representations are effective, we say that the orbifold is effective (see, for instance, one of \cite{haefliger,MP} for the precise definition).  One can equip an effective orbifold with a ``smooth structure'' in many different ways \cite{MP,lerman,IKZ,watts-orb}.  No matter which notion of smoothness is taken, in \cite{watts-orb} it is shown that the underlying local semi-algebraic set of a smooth (effective) orbifold, equipped with its natural differential structure, holds a complete set of orbifold invariants in its differential structure; that is, an atlas for the orbifold can be recovered from the smooth functions on the orbifold alone.  It is natural to ask what happens in the case of a quotient by a smooth circle action on a manifold.  The purpose of this paper is to take the first step toward solving this problem by considering the case of linear circle actions: can one recover a linear circle action (up to diffeomorphism) by examining the differential structure of the orbit space alone?  

The above question and result can be seen from a broader perspective: there is a functor from Lie groupoids to differential spaces, sending a groupoid to its orbit space \cite{watts-gpds}.  Studying this functor, especially when restricted to proper Lie groupoids, leads to a modern connection between two classical subjects: Lie group actions of compact groups, and singular spaces (namely, semi-algebraic varieties).  The result on orbifolds in \cite{watts-orb} is that this functor when restricted to proper effective \'etale Lie groupoids is essentially injective (\emph{i.e.} injective up to isomorphism).  This paper deals with the restriction to linear $\SS^1$-actions.

The generalization of smooth structures from manifolds to arbitrary subspaces and quotient spaces has a long history, though the perspective we take was first formally defined by Sikorski \cite{sikorski67,sikorski71} and is presented in brief here (see \cite{sniatycki} for more details on these spaces).

\begin{definition}[Differential Space] \label{d:diff space}
Let $X$ be a non-empty set.  A \textbf{differential structure} on $X$ is a non-empty family
$\calF$ of real-valued functions satisfying:
\begin{enumerate}
\item (Smooth Compatibility) For any positive integer $k$, functions
$f_1,...,f_k\in\calF$, and $g\in\CIN(\RR^k)$, the composition
$g(f_1,...,f_k)$ is contained in $\calF$.
\item (Locality) Equip $X$ with the initial topology induced by $\calF$; that is, the weakest topology such that each function in $\calF$ is continuous.  Let $f\colon X\to\RR$ be a function such that for any $x\in X$
there exist an open neighborhood $U$ of $x$ and a function
$h\in\calF$ satisfying $f|_U=h|_U$.  Then $f\in\calF$.
\end{enumerate}
A set $X$ equipped with a differential structure $\calF$ is called
a \textbf{differential space}, and is denoted $(X,\calF)$.  We will drop the notation $\calF$ when it is superfluous. In the literature (for example, \cite{schwarz}), authors use differential structures without naming them, possibly unaware that the structures had been formally named. 
\end{definition}

\begin{definition}[Smooth Maps Between Differential Spaces]
Let $(X,\mathcal{F}_X)$ and $(Y,\mathcal{F}_Y)$ be two differential
spaces.  A map $F \colon X\to Y$ is \textbf{smooth}
if $F^*\calF_Y\subseteq\calF_X$.  $F$ is called a
\textbf{diffeomorphism} if it
is smooth and has a smooth inverse.
Denote the set of smooth maps between $X$ and $Y$ by $\calF(X,Y)$.
\end{definition}

Differential spaces with smooth maps between them form a category closed under taking subsets and quotients.

\begin{definition}[Subspace Differential Structure]
Let $(X,\calF)$ be a differential space, and let $Y\subseteq X$ be
a subset.  Then $Y$ acquires a differential structure $\calF_Y$ as follows: $f\in\calF_Y$ if for every $y\in Y$ there exist an open neighborhood $U\subseteq X$ of $y$ and a function $g\in\calF$ such that $f|_{U\cap Y}=g|_{U\cap Y}$.  We call $(Y,\calF_Y)$ a \textbf{differential subspace} of $(X,\calF_X)$.  Note that the subspace topology on $Y$ equals the initial topology induced by $\calF_Y$; see \cite[Lemma 2.28]{watts-phd}.
\end{definition}

\begin{definition}[Quotient Differential Structure]
Let $(X,\calF)$ be a differential space, let $\sim$ be an
equivalence relation on $X$, and let $\pi \colon X \to X/\!\!\sim$
be the quotient map.  Then $X/\!\!\sim$ obtains a differential
structure $\calF_\sim$, called the \textbf{quotient differential structure},
consisting of those functions
$f \colon X/\!\!\sim\ \to\RR$
whose pullback $\pi^*f \colon X \to \RR$ is in $\calF$.
\end{definition}

It follows from a famous result of Schwarz \cite{schwarz} that if $G$ is a compact Lie group acting effectively and orthogonally on $\RR^m$, then the orbit space $\RR^m/G$ embeds into a Euclidean space $\RR^n$ so that the quotient differential structure on $\RR^m/G$ equals the subspace differential structure induced by the embedding.  Denote this differential structure by $\CIN(\RR^m/G)$.  In the case of finite groups, the diffeomorphism class of $(\RR^m/G,\CIN(\RR^m/G))$ as a differential space determines the group $G$ up to isomophism, and the $G$-representation up to isomorphism (see the proof of the Main Theorem of \cite{watts-orb}).  The invariants obtained from $\CIN(\RR^m/G)$ can be interpreted through two different perspectives: either quotient (linear representation) invariants on $\RR^m\to\RR^m/G$, or as subset invariants of $(\RR^m/G,\CIN(\RR^m/G))$, arising by examining the underlying semi-algebraic set of the orbit space (see Subsection~\ref{ss:lin repr} for a definition of semi-algebraic set).

Moreover, this recovery of linear actions of finite groups allows one to form an orbifold atlas of a general effective orbifold from the differential structure of its orbit space.  Consequently, the diffeomorphism class of the orbit space of a proper, effective, and locally-free Lie group action on a manifold determines the corresponding Lie groupoid up to Morita equivalence.  What is interesting about this result is that it does not hold for all proper and effective Lie group actions on manifolds, not even in the linear case.  Indeed, consider $\O(m)$ acting on $\RR^m$ by rotations and reflections.  The orbit space is diffeomorphic to the closed ray $[0,\infty)$, which is independent of $m$, and so the subset invariants do not form a complete set of invariants for the group actions.  And so, the original action cannot be recovered.  The question remains, however: what information is missing from the subset invariants that would lead to a complete set of invariants so that the action can be recovered from the orbit space?

In this paper, we examine the case of a linear circle action, and obtain the following theorem.

\begin{theorem}\label{t:main}
Let $\SS^1$ act linearly and effectively on $\RR^m$.  The diffeomorphism class of the quotient differential space $(\RR^m/\SS^1,\CIN(\RR^m/\SS^1))$ determines the $\SS^1$-action up to equivariant linear isomorphism.
\end{theorem}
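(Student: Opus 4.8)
The plan is to reduce an arbitrary effective linear circle action to a normal form and then extract, from the differential structure of the orbit space alone, enough invariants to reconstruct that normal form up to equivariant linear isomorphism. First I would recall that any linear $\SS^1$-action on $\RR^m$ is, after choosing an invariant inner product, a direct sum of two-dimensional rotation representations with integer weights $a_1,\dots,a_n$ (plus possibly a trivial summand), where $m=2n$ or $m=2n+1$. Effectiveness forces $\gcd(a_1,\dots,a_n)=1$, and we may take all $a_i>0$. The equivariant linear isomorphism class is then exactly the unordered multiset of weights $\{a_1,\dots,a_n\}$ together with the dimension of the trivial summand. So the theorem amounts to the claim: \emph{the diffeomorphism type of $(\RR^m/\SS^1,\CIN(\RR^m/\SS^1))$ determines this multiset of weights and the trivial dimension.}

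My next step would be to identify ``differential-space-intrinsic'' quantities of the orbit space that recover these data. The orbit space carries a natural \emph{orbit-type stratification}, and I expect this stratification to be recoverable from $\CIN$ alone --- for instance via the local structure of the ring of smooth functions, or by detecting where the space fails to be locally a smooth manifold (the singular strata). The global top dimension of the orbit space gives $m-1$, pinning down $m$. The stratum fixed by all of $\SS^1$ is the image of the trivial summand, so its dimension recovers the trivial dimension. The crucial invariants are the \emph{isotropy weights}: a point with stabilizer $\ZZ/k$ lies on a stratum whose transverse structure is modeled on $\CC/(\ZZ/k)$, and the integer $k$ appearing there is governed by which weights $a_i$ share a common factor. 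The key technical move is to show that each such $k$, and the dimension of the corresponding stratum, are read off from the local differential structure (e.g. from the transverse cone or the minimal number of generators of the local invariant ring), and that together these numerical data determine the multiset $\{a_i\}$.

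The reconstruction of the weights from the stratification data is essentially combinatorial: the stabilizer of a generic point in the subrepresentation spanned by those $i$ with $p\mid a_i$ is $\ZZ/p$, so the stratification ``sees'' every prime power dividing some weight and the dimension of the locus on which it acts. I would formalize a dictionary sending the multiset $\{a_i\}$ to the collection of (stratum dimension, isotropy order, transverse weight) triples, and then prove this dictionary is injective. The cleanest route is probably to use the embedding of the orbit space as a semi-algebraic set via Schwarz's theorem, so that the strata and their transverse models become genuinely intrinsic semi-algebraic/differential invariants rather than artifacts of the quotient presentation; this is where I would invoke that the subspace and quotient differential structures agree.

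The main obstacle I anticipate is the injectivity of the dictionary, that is, showing that distinct weight multisets genuinely produce non-diffeomorphic orbit spaces. The difficulty is that the coarse data (orbit-type strata with their orders and dimensions) may fail to separate certain weight configurations --- two different multisets could conceivably yield the same list of (dimension, isotropy order) pairs, so I would need a finer invariant that remembers \emph{how} the singular strata fit together and how the transverse isotropy representations are glued along lower strata. Capturing this gluing data intrinsically from $\CIN(\RR^m/\SS^1)$, and proving it suffices to reconstruct the full multiset of weights, is where the real work lies; the cautionary $\O(m)$ example in the introduction shows that naive subset invariants are genuinely insufficient, so the argument must exploit the richer differential structure near the singular locus rather than just the underlying topological or semi-algebraic set.
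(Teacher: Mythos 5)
Your overall strategy matches the paper's: reduce to the normal form $\RR^{n-2m}\times\CC^m$ with positive weights of $\gcd$ one, recover the ambient dimension and the trivial summand from the topology and the orbit-type stratification, and then reconstruct the weight multiset from isotropy data attached to the strata. But the proposal stops short at exactly the two places where the paper does its actual work. First, the recoverability of the isotropy data from $\CIN(\RR^m/\SS^1)$ is asserted rather than established: the paper obtains the stratification itself from \'Sniatycki's theorem (Theorem~\ref{t:ot strat}), and obtains the \emph{orders} of the isotropy groups by deleting the minimal stratum, recognizing the remainder as an effective orbifold (Proposition~\ref{p:orbit space}), and invoking Corollary~\ref{c:orbifold}, which rests on the orbifold recognition theorem of \cite{watts-orb} (via the Haefliger--Ngoc Du algorithm and Milnor numbers at codimension-two strata). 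Your suggested substitutes (``detecting where the space fails to be locally a manifold,'' ``the minimal number of generators of the local invariant ring'') are not developed into proofs, and your claim that a point with stabilizer $\ZZ_k$ has transverse model $\CC/\ZZ_k$ is false in general: for weights $(2,3,6)$ on $\CC^3$, the slice at a point of the order-$6$ stratum is $\CC^2$ with $\ZZ_6$ acting with weights $(2,3)$, so the transverse model is $\CC^2/\ZZ_6$, not $\CC/\ZZ_6$.

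Second, and more seriously, you explicitly leave open the injectivity of the dictionary from weight multisets to stratification data, calling it ``where the real work lies.'' That injectivity \emph{is} the theorem; naming it as the obstacle is not proving it. The paper closes this gap with a concrete recursive algorithm: the strata of the orbit space correspond to unions of faces of an $(m-1)$-simplex whose vertices carry the weights; a stratum of codimension $2p$ corresponds to an $(m-1-p)$-face containing $m-p$ vertices; one starts with the strata of maximal depth in the closure partial order $\preceq$ (being minimal, these contain the images of the relevant sets $S_j$, so all of their vertices receive that stratum's isotropy order as their weight) and proceeds recursively through decreasing depth, assigning to the not-yet-labeled vertices of each stratum's face the isotropy order of that stratum, any vertices still unlabeled at the top stratum receiving weight $1$. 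Your instinct that the list of (dimension, isotropy order) pairs alone may not suffice, and that one must also record how strata attach to one another, is exactly right --- the algorithm consumes the Hasse diagram of $\preceq$ together with the a priori knowledge that the space arose from an effective linear circle action, which is what legitimizes the simplex bookkeeping --- but since you do not construct this (or any substitute) reconstruction procedure, the proposal does not prove the theorem.
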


Since linear isomorphisms are examples of diffeomorphisms, this theorem answers the question in the first paragraph above affirmatively.  Note that one needs to know that the differential space \emph{came from} a \emph{linear} $\SS^1$-action in the theorem above.  Even knowing that the space is an orbit space of a smooth effective $\SS^1$-action on a manifold is not sufficient to recover the action: consider $\SS^1$ acting (effectively) on $\SS^2$ by rotation about a fixed axis.  The orbit space is diffeomorphic to the manifold-with-boundary $[-1,1]$.  But this action descends through the antipodal action to $\RR\PP^2$, whose orbit space $[0,1]$ is diffeomorphic to $[-1,1]$.  The missing information that would distinguish between these two orbit spaces is knowledge about the isotropy groups: at the pre-image of $-1\in\SS^2/\SS^1$, this isotropy group is $\SS^1$; whereas at the pre-image of $0\in\RR\PP^2/\SS^1$, it is $\ZZ/2\ZZ$.  Characterizing this information is the subject of future work.

This paper is organized as follows.  In Section~\ref{s:prelims}, we review compact group actions, orbit-type stratifications, the result on orbifolds mentioned above, and linear circle actions.  In Section~\ref{s:invt poly}, we give a description of $\SS^1$-invariant polynomials for a linear circle action, culminating in Corollary~\ref{c:invt poly}.  In Section~\ref{s:orbit space}, we take the opportunity to study the structure of the orbit space of a linear $\SS^1$-action.  While this is not needed in the proof of the main result, this structure is interesting in its own right, and important for understanding the singularities that arise.  Section~\ref{s:linear case} contains the proof of Theorem~\ref{t:main}, and Section~\ref{s:examples} contains several examples.

\subsection*{Acknowledgements}
This paper is a result of a summer Research Experience for Undergraduates (REU) project in 2016, hosted by the Department of Mathematics at the University of Colorado Boulder.  The authors wish to thank the department for their hospitality and support, and the anonymous referee for excellent suggestions.

\section{Preliminaries}\label{s:prelims}

\subsection{Linear Compact Group Actions}\label{ss:lin repr}

Let us begin by reviewing the result of Schwarz, and related background material.  Fix a compact Lie group $G$, and assume $G$ is acting linearly and effectively on $\RR^m$.  Without loss of generality, since $G$ is compact, we can assume that $G\subseteq\O(m)$.  Denote by $P(\RR^m)$ the ring of real-valued polynomials on $\RR^m$, and by $P(\RR^m)^G$ the subring of polynomials invariant under the $G$-action; that is, polynomials $p$ satisfying $p(g\cdot x)=p(x)$ for all $x\in\RR^m$ and $g\in G$.  By Hilbert's Basis Theorem, $P(\RR^m)^G$ is finitely-generated.  That is, there exist $\sigma_1,\dots,\sigma_n\in P(\RR^m)^G$ such that for any $p\in P(\RR^m)^G$, there exists a polynomial $q\in P(\RR^n)$ such that $p=q(\sigma_1,\dots,\sigma_n)$.  Moreover, we can choose $\sigma_1,\dots,\sigma_n$ all to be homogeneous; that is, for each $\sigma_i$, its terms are all of the same degree.

Geometrically, what this means is that we can form the \textbf{Hilbert map} $\sigma\colon\RR^m\to\RR^n$ defined to be the $n$-tuple $(\sigma_1,\dots,\sigma_n)$, and for every $p\in P(\RR^m)^G$ there exists $q\in P(\RR^n)$ such that $p=q\circ\sigma$.  That is, $P(\RR^m)^G$ is the image of the \textbf{pullback map} $\sigma^*\colon P(\RR^n)\to P(\RR^m)$ sending $q$ to $\sigma^*q:= q\circ\sigma$.

Schwarz \cite{schwarz} extends this result from polynomials to smooth functions: the image of $\sigma^*\colon\CIN(\RR^n)\to\CIN(\RR^m)$ is exactly the invariant smooth functions $\CIN(\RR^m)^G$, the set of all smooth functions $f\in\CIN(\RR^m)$ such that $f(g\cdot x)=f(x)$ for all $x\in\RR^m$ and $g\in G$.

Let $\pi\colon\RR^m\to\RR^m/G$ be the quotient map.  Schwarz further shows that $\sigma$ descends to a topological embedding $i\colon\RR^m/G \to \RR^n$; hence we can view $\RR^m/G$ as a subset of $\RR^n$, and it obtains a subspace differential structure in this way.  Since $\pi^*$ is an isomorphism from $\CIN(\RR^m/G)$ to $\CIN(\RR^m)^G$, which in turn is isomorphic to $\sigma^*\CIN(\RR^n)$, we conclude that $i^*\CIN(\RR^n)=\CIN(\RR^m/G)$; that is, the subspace differential structure equals the quotient differential structure.

We have the following lemma, which we use in the sequel.

\begin{lemma}\labell{l:topologies}
Let $G$ be a compact group acting on a smooth manifold $M$.  The initial topology induced by $\CIN(M/G)$ equals the quotient topology on the orbit space.
\end{lemma}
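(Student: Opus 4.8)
The plan is to prove the two topologies coincide by establishing both inclusions; write $\tau_q$ for the quotient topology and $\tau_i$ for the initial topology induced by $\CIN(M/G)$. For the inclusion $\tau_i\subseteq\tau_q$, I would simply unwind the definition of the quotient differential structure: every $f\in\CIN(M/G)$ has pullback $\pi^*f=f\circ\pi\in\CIN(M)$, which is continuous, so by the universal property of the quotient topology $f$ itself is $\tau_q$-continuous. Since $\tau_i$ is by definition the weakest topology making every member of $\CIN(M/G)$ continuous, it follows immediately that $\tau_i\subseteq\tau_q$. This is the routine half.

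The substantive half is the reverse inclusion $\tau_q\subseteq\tau_i$, which amounts to showing that the invariant smooth functions are rich enough to detect the quotient topology. Concretely, I would fix a $\tau_q$-open set $U$ and a point $\bar x\in U$, and aim to produce a single $f\in\CIN(M/G)$ with $f(\bar x)=1$ and $f\equiv 0$ on $M/G\setminus U$. Granting such an $f$, the set $f^{-1}\big((1/2,\infty)\big)$ is a $\tau_i$-open neighbourhood of $\bar x$ contained in $U$, so $U$ is a union of $\tau_i$-open sets and hence lies in $\tau_i$. To build $f$, I would lift the problem to $M$: choose $x\in\pi^{-1}(\bar x)$ and set $C:=\pi^{-1}(M/G\setminus U)$, a $G$-invariant closed subset of $M$ disjoint from the compact orbit $G\cdot x$.

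The heart of the argument is then the construction of a $G$-invariant separating function on $M$. First I would use the existence of smooth partitions of unity on $M$, subordinate to the open cover $\{M\setminus C,\ M\setminus(G\cdot x)\}$, to obtain $\phi\in\CIN(M)$ with $0\le\phi\le1$, $\phi\equiv1$ on $G\cdot x$, and $\phi\equiv0$ on $C$. This $\phi$ need not be invariant, so I would average it over the group against normalized Haar measure, setting $\tilde\phi(y):=\int_G\phi(g\cdot y)\,dg$. Because the action is smooth and $G$ is compact, $\tilde\phi$ is smooth (differentiation under the integral over a compact domain) and $G$-invariant (bi-invariance of Haar measure), so $\tilde\phi\in\CIN(M)^G$. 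Since $\phi\equiv1$ on all of $G\cdot x$ we get $\tilde\phi(x)=1$, and since $C$ is $G$-invariant with $\phi|_C=0$ we get $\tilde\phi|_C=0$; hence $\tilde\phi$ descends to the desired $f\in\CIN(M/G)$.

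I expect the main obstacle to be conceptual rather than computational: recognizing that the reverse inclusion is exactly the assertion that $\CIN(M/G)$ separates points from $\tau_q$-closed sets, and that the one genuine subtlety is that this differential structure consists of \emph{globally} $G$-invariant smooth functions rather than merely locally defined ones. The averaging step is precisely what upgrades an ad hoc bump function into such a global invariant, and compactness of $G$ enters twice: once to make the orbit $G\cdot x$ compact (hence separable from the closed set $C$), and once to make the Haar average well defined and smooth. A pleasant feature of this route is that it requires neither Schwarz's theorem nor the slice theorem, and so applies to an arbitrary compact group $G$ acting on an arbitrary (paracompact) manifold $M$.
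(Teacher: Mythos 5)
Your proof is correct and takes essentially the same approach as the paper: the easy inclusion $\tau_i\subseteq\tau_q$ via continuity of pullbacks (the paper phrases this with sub-basic sets $f^{-1}((0,1))$, you via the universal property), and the substantive inclusion $\tau_q\subseteq\tau_i$ via a $G$-invariant bump function, obtained by averaging over the compact group, which descends to an element of $\CIN(M/G)$ separating a point of $U$ from the complement of $U$. The only difference is presentational: you spell out the partition-of-unity construction and the Haar-measure averaging that the paper's proof leaves implicit in the phrase ``after averaging over $G$.''
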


\begin{proof}
Let $U$ be an open set in the initial topology on $M/G$, and fix $x\in U$.  There is a function $f\in\CIN(M/G)$ such that
$$x\in f^{-1}((0,1)) \subseteq U.$$
Let $\pi\colon M\to M/G$ be the quotient map, and $\widetilde{f}\in\CIN(M)$ be such that $\pi^*f = \widetilde{f}.$
Then $\pi^{-1}(f^{-1}((0,1)))=\widetilde{f}^{-1}((0,1))$, and so is open in $M$.  So, $f^{-1}((0,1))$ is open in the quotient topology.  It follows that $U$ is in the quotient topology.

In the other direction, let $U$ be an open set in the quotient topology.  Fix $x\in U$.  Then the orbit $\pi^{-1}(x)$ is closed, and contained in the $G$-invariant open set $\pi^{-1}(U)$.  Let $\widetilde{b}\colon M\to[0,1]$ be a smooth bump function equal to $1$ on the orbit $\pi^{-1}(x)$ with support in $\pi^{-1}(U)$.  After averaging over $G$, we may assume $\widetilde{b}$ is $G$-invariant.  Thus it descends to a smooth function $b$ on $M/G$ such that 
$$x\in b^{-1}((0,\infty))\subseteq U.$$
That is, $U$ is in the initial topology.
\end{proof}

In particular, there is no ambiguity in the topology on $\RR^m/G$ that we use. We can describe how $\RR^m/G$ sits in $\RR^n$ as a subset.  A \textbf{semi-algebraic set} $S=\cup_{i=1}^{m}S_{i}$ is a subset of $\RR^n$ where the subsets $S_i$ are of the form
$$S_{i}=\{x\in\RR^n\mid r_{i,1}(x),\dots,r_{i,k_i}(x)>0 \text{ and } s_{i,1}(x)=\dots=s_{i,\ell_i}(x)=0\},$$ where $r_{i,1},\dots,r_{i,k_i},s_{i,1},\dots,s_{i,\ell_i}\in P(\RR^n)$.  For our purposes, we will assume $S$ is equipped with the subspace differential structure induced by $\RR^n$.  (We call a differential space that is locally diffeomorphic to semi-algebraic sets a \textbf{local} semi-algebraic set.)  The Tarski-Seidenberg Theorem \cite{seidenberg,tarski48,tarski98} states that the image of a semi-algebraic set under a polynomial map (such as the Hilbert map above) is again a semi-algebraic set.  It follows that since $\RR^m/G$ sits inside $\RR^n$ as the image of $\sigma$, it is a semi-algebraic set.

\subsection{Compact Group Actions on Manifolds}\label{ss:cpct gp actions}

We now want to extend these ideas to group actions on manifolds.  Again, let $G$ be a compact Lie group acting smoothly on a manifold $M$ with quotient map $\pi\colon M\to M/G$.  Let $x\in M$, and let $H$ be the stabilizer of the action at $x$.  Note that $H$ is compact.  Define the \textbf{isotropy action} of $H$ on $T_xM$ by $h\cdot v=h_*v$ for any $v\in T_xM$.  Here we view elements of $G$ as diffeomorphisms of $M$, and since elements of $H$ fix $x$, this action is well-defined.  It is also linear, which in the effective case puts us back into the situation described above.  Note that for any $v\in T_x(G\cdot x)$, any smooth curve $c\colon\RR\to G\cdot x$ such that $c(0)=x$, and $\dot{c}(0)=v$, and any $h\in H$,
$$h\cdot v = h_*v = \frac{d}{dt}\Big|_{t=0}h\cdot c(t)$$
where $\frac{d}{dt}\Big|_{t=0}h\cdot c(t)$ is in $T_x(G\cdot x)$ since it is the derivative of a new smooth curve contained in $G\cdot x$.  Thus, $T_x(G\cdot x)$ is an $H$-invariant linear subspace of $T_xM$, and so the isotropy action descends to a linear $H$-action (also called the \textbf{isotropy action}) on the normal space $V:=T_xM/T_x(G\cdot x)$ to the $G$-orbit at $x$.

Since $H$ is a subgroup of $G$, it acts on $G$ by $h\cdot g=gh^{-1}$, and so we have the $H$-action on the product $G\times V$ defined by $$h\cdot(g,v):=(gh^{-1},h\cdot v).$$  Denote the orbit space of this action by $G\times_H V$.  Note that $G$ acts on $G\times_H V$ by $g'\cdot [g,v]=[g'g,v]$.   We have the following theorem of Koszul \cite{koszul}, \cite[Section 2.3]{DK}.

\begin{theorem}[The Slice Theorem]\label{t:slice}
Let $G$ be a compact Lie group acting on a manifold $M$.  For any $x\in M$ there exists an open $G$-invariant neighborhood $U$ of $x$, and a $G$-equivariant diffeomorphism $F\colon U\to G\times_HV$ where $H$ is the stabilizer of $x$ and $V$ is the normal space to $G\cdot x$ in $M$ at $x$ equipped with the isotropy action of $H$.
\end{theorem}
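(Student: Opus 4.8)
The plan is to prove the Slice Theorem by the classical exponential-map argument, exploiting compactness of $G$ at every step. First I would average an arbitrary Riemannian metric on $M$ over $G$ using normalized Haar measure to produce a $G$-invariant metric; this is precisely where compactness is essential, and it is what makes the entire construction equivariant. With respect to this metric the orbit $G\cdot x$ is a closed embedded submanifold (again because $G$ is compact), so I would identify the normal space $V=T_xM/T_x(G\cdot x)$ with the orthogonal complement $(T_x(G\cdot x))^\perp$ inside $T_xM$. Under this identification the isotropy $H$-action on $V$ is simply the restriction of the $H$-action on $T_xM$ to this invariant subspace, matching the data in the statement.

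Next I would assemble the normal bundle $N\to G\cdot x$, whose fiber over $x$ is $V$, and identify it $G$-equivariantly with the associated bundle $G\times_HV$. Concretely, the map $G\times V\to N$ sending $(g,v)$ to $g_*v\in N_{g\cdot x}$ is surjective, and $(g,v)$ and $(g',v')$ share the same image precisely when $g'=gh$ and $v=h_*v'$ for some $h\in H$. Taking $h_0 := g^{-1}g'\in H$, this says $(g',v')=(gh_0,h_0^{-1}\cdot v)$, which is exactly $h_0^{-1}\cdot(g,v)$ for the relation $h\cdot(g,v)=(gh^{-1},h\cdot v)$ defining $G\times_HV$. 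Hence the map descends to a bijection $G\times_HV\to N$ intertwining the $G$-action $g'\cdot[g,v]=[g'g,v]$ with the natural $G$-action $w\mapsto g'_*w$ on $N$. This is the step demanding the most care: one must verify that $G\times_HV$ is a smooth manifold and that the induced $G$-equivariant map is a diffeomorphism, not merely a bijection.

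Finally I would invoke the equivariant tubular neighborhood theorem. Because the metric is $G$-invariant, its exponential map satisfies $\exp_{g\cdot p}(g_*w)=g\cdot\exp_p(w)$ and is therefore $G$-equivariant. The exponential map restricts to a diffeomorphism from a neighborhood of the zero section of $N$ onto a neighborhood of the orbit; using compactness of $G\cdot x\cong G/H$ together with equivariance, I would shrink this to a $G$-invariant neighborhood $U$ and rescale the fibers so as to recover all of $V$. Composing $\exp^{-1}$ with the identification $N\cong G\times_HV$ then yields the desired $G$-equivariant diffeomorphism $F\colon U\to G\times_HV$.

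The main obstacle is not any single analytic estimate but organizing the equivariance uniformly along the entire orbit rather than at the single point $x$: both the tubular neighborhood and the bundle identification must be produced $G$-invariantly. The purely local input, that $\exp$ is a local diffeomorphism near the zero section, is standard; the real work lies in globalizing it equivariantly over $G\cdot x$, and this is exactly what the $G$-invariant metric and the compactness of $G$ provide.
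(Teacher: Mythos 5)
Your proposal is correct, but note that the paper itself gives no proof of this statement: it is quoted as a known result of Koszul, with reference to \cite[Section 2.3]{DK}. The argument you outline --- averaging a metric over Haar measure, identifying $V$ with the orthogonal complement $(T_x(G\cdot x))^{\perp}$, identifying the normal bundle of the orbit with $G\times_H V$ (your verification of the equivalence relation, $(g',v') = h_0^{-1}\cdot(g,v)$ with $h_0 = g^{-1}g'$, matches the paper's convention $h\cdot(g,v)=(gh^{-1},h\cdot v)$ exactly), and then applying the equivariant tubular neighborhood theorem via the equivariant exponential map with a fiberwise rescaling --- is precisely the classical proof found in the cited reference, so you have in effect reproduced the intended argument rather than deviated from it.
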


\begin{remark}\label{r:slice}
The Slice Theorem holds more generally for proper actions \cite{palais}, but we only need the compact case.
\end{remark}

It follows from the Slice Theorem that for any point $\pi(x)\in M/G$ there is an open neighborhood of $\pi(x)$ of the form $V/H\cong(G\times_H V)/G$, where $V$ is the normal space to $G\cdot x$ at $x$ as above.  By Lemma~\ref{l:topologies}, the quotient topology on $M/G$ is equal to the initial topology induced by the quotient differential structure $\CIN(M/G)$.  Since $V/H$ is semi-algebraic, it follows that $M/G$ is a local semi-algebraic set.

\subsection{Orbit-Type Stratification}\label{ss:ot strat}

Given a compact Lie group action of $G$ on $M$, for any closed subgroup $H\leq G$ we define the subset $M_{(H)}$ as follows: 
$$M_{(H)}:=\{x\in M\mid \exists g\in G \text{ s.t. } \Stab(x)=gHg^{-1}\}.$$
The connected components of these subsets partition $M$ into embedded submanifolds which together form a \textbf{stratification}; in particular, the partition is locally finite and if $C_1$ and $C_2$ are two such submanifolds such that $C_1\cap \overline{C_2}\neq\emptyset$, then either $C_1=C_2$ or $C_1$ is contained in the boundary of $C_2$.  We refer to this stratification as the \textbf{orbit-type stratification}.  (We do not intend to give the full definition of a stratification.  This is in fact very involved and will take us away from the point of the paper.  The reader who is interested should consult, for example, the book by Pflaum \cite{pflaum}.  For details on the orbit-type stratification, see \cite[Section 2.7]{DK}.)

The sets $M_{(H)}$ are $G$-invariant, and so descend to a partition of $M/G$ into subsets $M_{(H)}/G$.  The connected components of these again form a stratification, which again we will call the \textbf{orbit-type stratification} of $M/G$ (see \cite[Sections 2.7, 2.8]{DK}).  We are interested in the local form of these stratifications.  Fix $x\in M$.  By the Slice Theorem, there is a $G$-invariant open neighborhood $U$ of $x$ and a $G$-equivariant diffeomorphism $U\to G\times_H V$ where $H$ is the stabilizer of $x$ and $V$ is the normal space to $G\cdot x$ at $x$, equipped with the isotropy action.  As noted above, $(G\times_H V)/G$ is diffeomorphic to $V/H$.  Since $H$ is compact, there exists an $H$-invariant inner product on $V$, and with respect to this inner product we can write $V\cong E\oplus F$ where $E$ is the linear subspace of $H$-fixed points (that is, the maximal subspace on which $H$ acts trivially), and $F$ is an $H$-invariant complement.  It follows that $V/H\cong E\times(F/H)$.  Denote by $k$ be the dimension of $F$, by $\SS^{k-1}\subseteq F$ the unit sphere with respect to an $H$-invariant norm, and by $L$ the quotient $\SS^{k-1}/H$.  The continuous map $\SS^{k-1}\times[0,\infty)\to F$ sending $(x,t)\mapsto xt$ is $H$-invariant, and descends to a homeomorphism between the cone of $L$, given by $(L\times[0,\infty))/(L\times\{0\}),$ and the quotient $F/H$.  The cone itself is a stratified space, with a stratum $S\times(0,\infty)$ for each orbit-type stratum $S$ of $L$, along with the apex of the cone which we denote by $z$.  The stratification of $V/H$ contains a stratum $E\times S'$ for each stratum $S'$ of $F/H$.  We refer to $L$ as the \textbf{link} of this stratification, and the apex of the cone $z$ the \textbf{distinguished stratum} of $F/H$.  As a differential space, $F/H\smallsetminus\{z\}$ is diffeomorphic to $L\times(0,\infty)$; however, be aware that the differential structure of $F/H$ in any neighborhood of $z$ does not necessarily equal the quotient differential structure near the apex of $(L\times[0,\infty))/(L\times\{0\})$.  We explore the differential structure near $z$ via an example at the end of Section~\ref{s:orbit space}.

As a last word on orbit-type stratifications, we have the following theorem of \'Sniatycki \cite[Theorem 4.3.10]{sniatycki}.  (While \'Sniatycki's proof is only for compact connected groups, the proof goes through for any proper action.)

\begin{theorem}[The Orbit-Type Stratification is an Invariant]\label{t:ot strat}
Let $G$ be a compact Lie group acting smoothly on a manifold $M$.  Then the orbit-type stratification of $M/G$ is an invariant of $\CIN(M/G)$.
\end{theorem}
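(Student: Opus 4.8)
The plan is to reconstruct the orbit-type strata intrinsically, purely from derivations of the algebra $\CIN(M/G)$ and their flows, so that the resulting partition is manifestly preserved by any diffeomorphism of differential spaces. Throughout I would use the two facts already available: that $M/G$ is a local semi-algebraic (subcartesian) set, as established via the Slice Theorem~\ref{t:slice} and Lemma~\ref{l:topologies}, and that $\CIN(M/G)$ is canonically identified with the invariant functions $\CIN(M)^G$.

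\textbf{Step 1 (vector fields from the differential structure).} First I would recall the notion of a smooth vector field on a differential space: a derivation $X\in\Der(\CIN(M/G))$ that integrates to a local one-parameter family of diffeomorphisms of $M/G$. On a subcartesian space such as $M/G$, the theory of \cite{sniatycki} guarantees that such derivations integrate in this sense and that their integral curves stay inside the space. The essential input is a \emph{lifting} statement: every such derivation of $\CIN(M/G)\cong\CIN(M)^G$ is induced by a $G$-invariant vector field on $M$. Working in the slice model $V/H$ of Theorem~\ref{t:slice}, this reduces to lifting derivations of the invariant functions $\CIN(V)^H$ to $H$-invariant vector fields on the representation $V$, which one obtains from Schwarz's description of $\CIN(V)^H$ through the Hilbert map \cite{schwarz}.

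\textbf{Step 2 (the orbit partition is an invariant).} The family of all smooth vector fields on $M/G$ determines a partition into \emph{orbits}, where two points are equivalent precisely when joined by a finite concatenation of local flows. Since the algebra $\CIN(M/G)$, the property of a derivation integrating to local diffeomorphisms, and hence the orbits, are all phrased solely in terms of the differential structure, any diffeomorphism $(M/G,\CIN(M/G))\to(N/G',\CIN(N/G'))$ carries this partition to the corresponding one on the target. Thus the orbit partition is an invariant of $\CIN(M/G)$, and it remains only to identify it with the orbit-type stratification.

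\textbf{Step 3 (orbits equal connected components of $M_{(H)}/G$).} For tangency, I would observe that a lifted $G$-invariant vector field $\tilde X$ on $M$ has a $G$-equivariant flow $\phi_t$, and equivariance forces $\Stab(\phi_t(p))=\Stab(p)$; hence $\phi_t$ preserves each $M_{(H)}$, so the descended flow on $M/G$ preserves each $M_{(H)}/G$, and every orbit lies in a single connected component of some $M_{(H)}/G$. Conversely, since $M_{(H)}$ is a $G$-invariant submanifold with $M_{(H)}/G$ a manifold, there are enough $G$-invariant vector fields tangent to $M_{(H)}$ to realize any tangent direction downstairs; concatenating their flows along a path shows any two points of a connected component lie in one orbit. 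The two partitions therefore coincide, so the orbit-type stratification is recovered from, and is an invariant of, $\CIN(M/G)$. (As a sanity check, for $\RR^2/\SO(2)\cong[0,\infty)$ every vector field must be tangent to the boundary, pinning $\{0\}$ as its own orbit and $(0,\infty)$ as the other, exactly the two orbit-type strata.)

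The main obstacle is the lifting in Step~1: showing that \emph{every} derivation of the invariant functions arises from a $G$-invariant vector field on $M$. This is the substantive analytic point, since it is what transports the algebraic and flow data on the quotient into equivariant geometry upstairs, where stabilizers can be controlled. A secondary subtlety is the transitivity claim in Step~3 at the singular strata, where one must produce enough tangent invariant vector fields; this too is handled most cleanly in the linear slice model, in which the module of invariant vector fields is generated explicitly. Once these two points are secured, the remainder of the argument is formal.
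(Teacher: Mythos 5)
Your proposal is correct and follows essentially the same route as the paper, whose proof consists of citing \'Sniatycki's Theorem 4.3.10 together with the observation that connected components of the orbit-type strata are exactly the accessible sets (orbits) of the family of all vector fields on $M/G$ induced by $\CIN(M/G)$ --- precisely the partition you build in Steps 1--3 and match to the strata. The one caveat is that the lifting statement you isolate in Step 1 is genuinely deeper than Schwarz's Hilbert-map description of $\CIN(V)^H$ (it rests on his later isotopy-lifting theorem), but you correctly flag it as the substantive analytic input, and the paper likewise defers this point entirely to the cited literature.
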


The proof of the above theorem comes from the fact that the connected components of orbit-type strata are exactly the 
accessible sets (also called orbits in the literature) of the family of all vector fields on $M/G$ induced by $\CIN(M/G)$.  The details of this would take us too far afield, and so we merely emphasize the fact that the stratification is an invariant of the differential structure.

\subsection{Recovering the Action - The Finite Group Case}\label{ss:orbifolds}

The purpose of this paper is to address the following question.  Given an effective linear $\SS^1$-action on $\RR^m$, can we recover the action from the differential space $(\RR^m/\SS^1,\CIN(\RR^m/\SS^1))$?  As mentioned in the introduction, in the case of a finite group $\Gamma$ acting effectively and linearly on $\RR^m$, the answer is affirmative: one can obtain invariants of the semi-algebraic set $(\RR^m/\Gamma,\CIN(\RR^m/\Gamma))$ from which the action of $\Gamma$ on $\RR^m$ can be recovered up to isomorphism.  Recall that a compact Lie group action of $G$ on an $m$-dimensional manifold $M$ is \textbf{locally free} if the stabilizer of every point is finite.  In this case, the Slice Theorem implies that for every point $x$ of $M/G$, there is a finite subgroup $\Gamma$, the \textbf{isotropy group} of $x$, a linear $\Gamma$-action on $\RR^m$, and an open neighborhood of $x$ diffeomorphic to $\RR^m/\Gamma$.  The orbit space $M/G$ equipped with an open cover by these neighborhoods, and for each of these neighborhoods the corresponding linear representation, is an \textbf{(effective) orbifold}.  (Again, we do not propose to give a rigorous definition of an orbifold; see the literature cited in the introduction for details.)  As mentioned above, locally, the differential structure encodes the linear representations of these finite groups; piecing together these local pictures into a global one, the following theorem due to the fifth author \cite{watts-orb} is obtained.

\begin{theorem}[The Orbifold Differential Structure]\label{t:orbifold}
Let $G$ be a compact Lie group acting smoothly, effectively, and locally freely on a connected manifold $M$.  The diffeomorphism class of the quotient differential space $(M/G,\CIN(M/G))$ determines the group action of $G$ on $M$, up to Morita equivalence.
\end{theorem}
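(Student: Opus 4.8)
The plan is to reconstruct an orbifold atlas on $M/G$ from the differential structure alone, and then to recognize that such an atlas pins down the action groupoid up to Morita equivalence. First I would invoke the Slice Theorem (Theorem~\ref{t:slice}) together with local freeness: since every stabilizer is finite, for each $x\in M$ the slice representation is a linear action of the finite group $\Gamma=\Stab(x)$ on the normal space $V=T_xM/T_x(G\cdot x)$, and $\pi(x)\in M/G$ has a neighborhood diffeomorphic to $V/\Gamma$. Thus $M/G$ is covered by charts modeled on quotients $\RR^{n}/\Gamma$ with $n=\dim M-\dim G$, and these are exactly the local models of an effective orbifold. By Lemma~\ref{l:topologies} the topology underlying this cover is unambiguous, and each local model carries its quotient (equivalently, subspace) differential structure.

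The second step is to extract the chart data from the differential structure intrinsically. Here I would use the finite group case: for a finite group acting effectively and linearly on $\RR^{n}$, the diffeomorphism class of $(\RR^{n}/\Gamma,\CIN(\RR^{n}/\Gamma))$ determines $\Gamma$ up to isomorphism and the representation up to isomorphism (the finite-group recovery recalled in Subsection~\ref{ss:lin repr}). Applying this to each local model $V/\Gamma\subseteq M/G$, the isotropy group and its slice representation at every point become diffeomorphism invariants of the local differential space. A diffeomorphism $F\colon M/G\to M'/G'$ therefore carries each chart of $M/G$ to a chart of $M'/G'$ with isomorphic group and isomorphic representation.

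The third, and hardest, step is the gluing: promoting these pointwise identifications to a coherent isomorphism of orbifold atlases. One must show that on overlaps the recovered charts are compatible, namely that an inclusion $U_1\subseteq U_2$ of orbit-space neighborhoods lifts to an embedding $\widetilde{U}_1\hookrightarrow\widetilde{U}_2$ of the linear models, and that $F$ induces such embeddings compatibly across the cover. The orbit-type stratification, which is an invariant of $\CIN(M/G)$ by Theorem~\ref{t:ot strat}, organizes this gluing: it is preserved by $F$ and constrains how the charts can overlap. The main obstacle is precisely that the local invariants (group and representation) are recovered only \emph{up to isomorphism}, so the work lies in pinning down the transition maps canonically enough that the patched pieces assemble into a genuine orbifold, rather than merely a family of locally isomorphic fragments; this is where the stratification data, compatible with $F$, must be leveraged to rigidify the choices on overlaps.

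Finally, I would translate the resulting orbifold isomorphism into the language of groupoids. An orbifold atlas on $M/G$ yields a proper \'etale groupoid, well-defined up to Morita equivalence under refinement of the atlas, and an isomorphism of atlases induces a Morita equivalence of these groupoids. Since a proper locally free action groupoid $G\ltimes M$ is itself Morita equivalent to the \'etale orbifold groupoid of its quotient, a diffeomorphism $M/G\cong M'/G'$ produces a chain of Morita equivalences $G\ltimes M\sim\text{(orbifold groupoid)}\sim G'\ltimes M'$, which is exactly the assertion that the two actions agree up to Morita equivalence.
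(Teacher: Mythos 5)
Your proposal reproduces, at the level of strategy, exactly what the paper has in mind; but note that the paper never actually proves Theorem~\ref{t:orbifold} --- it is imported wholesale from \cite{watts-orb}, and the surrounding discussion in Subsection~\ref{ss:orbifolds} only sketches the strategy of ``piecing together these local pictures into a global one.'' Measured against that source, your write-up has one structural weakness and one genuine gap. The weakness: you invoke the finite-group recovery (that the diffeomorphism class of $\RR^n/\Gamma$ determines $\Gamma$ and its representation) as a prior black box. In \cite{watts-orb} that recovery is not an independent input: it is part of the proof of the Main Theorem there, i.e.\ of the very statement you are proving, so a self-contained argument cannot simply cite it. The mechanism that powers it is the one the paper describes after Corollary~\ref{c:orbifold}: the orbit-type stratification is an invariant of the differential structure (Theorem~\ref{t:ot strat}); the orders of the cyclic isotropy groups along codimension-$2$ strata are extracted from the Milnor numbers of the singularities forming those strata; and the Haefliger--Ngoc Du algorithm \cite{HND} then rebuilds the local orbifold fundamental group --- hence the isotropy group at every point --- from the codimension-$0$, $1$, and $2$ strata together with those orders. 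A proof that never touches this mechanism has not explained how any group-theoretic data is actually read off the function algebra; it has only relocated the difficulty into the cited lemma.

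The genuine gap is your third step. You correctly isolate the obstruction --- the local charts and groups are recovered only up to isomorphism, so the transition data must be made coherent across overlaps --- but you then assert, rather than show, that the stratification ``rigidifies'' these choices. That coherence problem is precisely where the global content of \cite[Theorem 5.10]{watts-orb} lies; stating it is not solving it, and nothing in your sketch indicates how a diffeomorphism $F\colon M/G\to M'/G'$ actually induces compatible embeddings of linear models over overlaps, nor how one rules out assembling the same family of local pieces into non-Morita-equivalent groupoids. Your first, second, and fourth steps are sound --- the slice-theorem reduction, the germ-locality of the invariants, and the chain of Morita equivalences $G\ltimes M\sim(\text{orbifold groupoid})\sim G'\ltimes M'$ are all as in the literature the paper cites --- so the proposal is a correct outline of the known proof, but as a proof it is incomplete at exactly the step you yourself flagged as hardest.
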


\begin{remark}\label{r:orbifold}
\noindent
\begin{enumerate}

\item\label{i:orbifold1} We will not define Morita equivalence in this paper, as this requires the introduction of the language of Lie groupoids.  We simply note that the Morita equivalence class of a Lie groupoid representing the orbifold can be recovered from the differential structure.

\item\label{i:orbifold2} The ``definition'' of an effective orbifold we use above is not the typical definition used.  In the literature, one usually uses an atlas definition or Lie groupoids.  However, it is a theorem that any effective orbifold is the quotient of a manifold by a compact, effective, and locally free Lie group action.  See \cite{satake56}, \cite[Section 1.5]{satake57} \cite{haefliger}, \cite{moerdijk}, \cite[Theorem 4.1]{MP}.
\end{enumerate}
\end{remark}

\begin{corollary}\labell{c:orbifold}
 Let $X$ be an effective orbifold, and fix $x\in X$.  Then the isotropy group at $x$ is determined up to isomorphism by the differential structure of $X$.
\end{corollary}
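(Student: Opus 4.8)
The plan is to reduce the statement to the finite-group recovery result recalled in Subsection~\ref{ss:lin repr} by localizing near $x$ with the Slice Theorem. First I would fix a presentation of the orbifold: by Remark~\ref{r:orbifold}(\ref{i:orbifold2}), write $X=M/G$ for a compact Lie group $G$ acting smoothly, effectively, and locally freely on a manifold $M$, with quotient map $\pi$, and choose $\tilde{x}\in M$ with $\pi(\tilde{x})=x$. Since the action is locally free, the stabilizer $\Gamma=\Stab(\tilde{x})$ is finite; this group, well-defined up to conjugacy in $G$ and hence up to isomorphism, is by definition the isotropy group at $x$. The goal is to show that the isomorphism class of $\Gamma$ depends only on the differential space $(X,\CIN(X))$ and not on the chosen presentation, and moreover is invariant under diffeomorphism. (One could alternatively invoke Theorem~\ref{t:orbifold} together with the fact that Morita equivalence matches isotropy groups at corresponding orbits, but the local argument below is more direct and avoids Morita equivalence.)

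Next I would extract a canonical local model at $x$. By the Slice Theorem~\ref{t:slice} there is a $G$-invariant open neighborhood of $\tilde{x}$ that is $G$-equivariantly diffeomorphic to $G\times_\Gamma V$, where $V=T_{\tilde{x}}M/T_{\tilde{x}}(G\cdot\tilde{x})$ carries the isotropy representation of $\Gamma$. Passing to quotients, as recorded after the Slice Theorem, gives a neighborhood of $x$ in $X$ diffeomorphic, as a differential space, to $V/\Gamma$. Since $\Gamma$ is finite and the action is locally free, $\dim V=\dim X=:m$, so $V\cong\RR^m$; and since $X$ is an effective orbifold, the $\Gamma$-action on $V$ is faithful. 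Thus $x$ has a neighborhood diffeomorphic to $(\RR^m/\Gamma,\CIN(\RR^m/\Gamma))$ for a finite group $\Gamma$ acting linearly and effectively, with $x$ corresponding to the image of the origin (the unique $\Gamma$-fixed point of $V$).

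Now I would relate the germ at $x$ to the whole cone. Because $\Gamma$ acts orthogonally, the radial scalings $v\mapsto tv$ commute with $\Gamma$ and descend to $\RR^m/\Gamma$; a $\Gamma$-equivariant radial diffeomorphism of an open ball $B\subseteq\RR^m$ onto $\RR^m$ therefore induces a diffeomorphism $B/\Gamma\cong\RR^m/\Gamma$ of differential spaces. Since the sets $B/\Gamma$ form a neighborhood basis of $x$, the germ of $(X,\CIN(X))$ at $x$ already determines the diffeomorphism class of the full cone $(\RR^m/\Gamma,\CIN(\RR^m/\Gamma))$. Finally, by the finite-group case recalled in Subsection~\ref{ss:lin repr} (the proof of the Main Theorem of \cite{watts-orb}), the diffeomorphism class of $(\RR^m/\Gamma,\CIN(\RR^m/\Gamma))$ determines $\Gamma$ up to isomorphism. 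Combining these, any diffeomorphism $\phi\colon X\to X'$ of effective orbifolds restricts near $x$ to a diffeomorphism of cone neighborhoods, yielding $\RR^m/\Gamma\cong\RR^{m'}/\Gamma'$ and hence $\Gamma\cong\Gamma'$; in particular $\Gamma$ is independent of the presentation and is a diffeomorphism invariant.

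The main obstacle is precisely this germ-to-cone step and the well-definedness it underwrites: one must argue carefully that the differential structure on an arbitrarily small neighborhood of $x$ recovers the full linear quotient $\RR^m/\Gamma$ (so that the finite-group recovery theorem, a statement about the global object, applies), and that two nested cone neighborhoods matched up by $\phi$ genuinely assemble into a diffeomorphism of the two local models. The homogeneity of the linear action — the scaling diffeomorphisms above — is exactly what makes the germ as strong as the global cone, and is the crux of the argument; everything else is an assembly of the Slice Theorem and the already-established finite-group recovery.
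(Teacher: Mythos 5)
Your overall strategy---localize at $x$ with the Slice Theorem and reduce to the finite-group linear recovery result of \cite{watts-orb}---is in the same spirit as the paper's, but the step you yourself flag as the crux, the germ-to-cone step, has a genuine gap. The scaling argument proves only that every neighborhood of $x$ contains a sub-neighborhood diffeomorphic to the full cone $\RR^m/\Gamma$ (and likewise for $\phi(x)$ and $\RR^{m'}/\Gamma'$). It does not prove what you then assert, namely that $\phi$ ``restricts near $x$ to a diffeomorphism of cone neighborhoods'': a diffeomorphism of differential spaces has no reason to carry a ball-quotient $B/\Gamma$ onto a ball-quotient $B'/\Gamma'$; its image is merely \emph{some} open neighborhood of $\phi(x)$. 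Chasing your argument through, what you actually obtain are cone-point-preserving open embeddings $\RR^m/\Gamma\hookrightarrow\RR^{m'}/\Gamma'$ and $\RR^{m'}/\Gamma'\hookrightarrow\RR^m/\Gamma$. This is a Schr\"oder--Bernstein-type situation, and mutual open embeddings do not in general yield a diffeomorphism; so you are never in a position to invoke the cited result, which is a statement about the diffeomorphism class of the \emph{whole} cone $(\RR^m/\Gamma,\CIN(\RR^m/\Gamma))$. Homogeneity makes any neighborhood of the cone point contain copies of the cone, but it does not by itself make the germ determine the cone's diffeomorphism class.

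The paper closes exactly this gap by working with invariants that are local (germ-level) from the start: following \cite[Theorem 5.10]{watts-orb}, one applies the algorithm of Haefliger and Ngoc Du \cite{HND} to the codimension-$0$, codimension-$1$, and codimension-$2$ strata near $x$ (the stratification being an invariant of the differential structure by Theorem~\ref{t:ot strat}), with the orders of the isotropy groups along the codimension-$2$ strata read off from the Milnor numbers of the corresponding singularities; this reproduces the orbifold fundamental group of a chart at $x$, i.e.\ the isotropy group, from arbitrarily small neighborhoods, so diffeomorphism invariance and independence of the presentation are automatic. To repair your proof you would need to replace the black-box global statement by this local form of the recovery (or prove separately that germ-diffeomorphic linear quotient cones are diffeomorphic, which is essentially the same local result). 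One further caution: the alternative route you mention---invoking Theorem~\ref{t:orbifold} together with the matching of isotropy groups under Morita equivalence---is the one the paper explicitly warns against, since Corollary~\ref{c:orbifold} is itself used in the proof of Theorem~\ref{t:orbifold}; that route is circular.
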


While Corollary~\ref{c:orbifold} is stated here as a consequence of Theorem~\ref{t:orbifold}, this fact is actually used in part of the proof of Theorem~\ref{t:orbifold} (see \cite[Theorem 5.10]{watts-orb}).  To prove it, locally about $x$, one uses an algorithm of Haefliger and Ngoc Du \cite{HND} that reproduces the orbifold fundamental group (and thus the isotropy group at $x$) from knowledge of the codimension-0, codimension-1, and codimension-2 strata, and the orders of isotropy groups at these codimension-2 strata.  In turn, these orders of isotropy groups at codimension-2 strata can be obtained via the Milnor numbers of the corresponding singularities forming the codimension-2 strata.  For details, consult \cite{watts-orb}.

\subsection{Linear Circle Actions}\label{ss:lin circ actions}

Let $\SS^1$ act linearly on $\RR^n$.  There is an $\SS^1$-equivariant linear change of coordinates $\RR^n\cong\RR^{n-2m}\times\CC^m$ where $\SS^1$ acts trivially on $\RR^{n-2m}$, and on $\CC^m$ we have for all $e^{i\theta}\in\SS^1$ and $(z_1,\dots,z_m)\in\CC^m$, 
\begin{equation}\label{e:circle action}
e^{i\theta}\cdot(z_1,\dots,z_m):=(e^{i\theta\alpha_1}z_1,\dots,e^{i\theta\alpha_m}z_m);
\end{equation}
the numbers $\alpha_j$ are integers called the \textbf{weights} of the action.  Since complex conjugation is a diffeomorphism, we may assume the weights are non-negative; in fact, we may assume further that any weight-$0$ factor is included in the $\RR^{n-2m}$ factor, and so each $\alpha_j$ is positive.  Finally, if we also impose the condition that the action be effective, then $\gcd(\alpha_1,\dots,\alpha_m)=1$ and all isotropy actions are effective.

Assume $\SS^1$ acts on $\CC^m$ effectively with positive weights.
Denote by $S_{j_1\dots j_k}$ the subset
$$\{(0,\dots,0,z_{j_1},0,\dots,0,z_{j_k},0,\dots,0)\mid z_{j_\ell}\neq 0,~\ell=1,\dots,k\}.$$
The open dense subset $S_{1\dots m}$ has trivial stabilizer at all points since
$$\gcd(\alpha_1,\dots,\alpha_m)=1.$$
It is a submanifold of dimension $2m$, and there exists exactly one such submanifold.
The set $S_{1\dots \widehat{j}\dots m}$ is a submanifold of dimension $2m-2$, and there exist exactly $m\choose 1$ such submanifolds; the hat symbol means that we remove the corresponding index.  And so on.  If $e^{i\theta}$ fixes a point in $S_{j_1\dots j_k}$, then 
$e^{i\theta\alpha_{j_\ell}} = 1$
for $\ell = 1,\dots,k$.  That is, $e^{i\theta}\in\ZZ_{\alpha_{j_\ell}}$ for $\ell=1,\dots,k$, which is equivalent to $e^{i\theta}\in\ZZ_{\gcd(\alpha_{j_1},\dots,\alpha_{j_k})}.$
We tabulate this data in Table~\ref{t:s-sets}.
\begin{table}
\begin{tabular}{|c|c|c|c|}
\hline
\underline{Set} & \underline{Order of Stabilizer} & \underline{Codimension} & \underline{Number} \\
\hline
$S_{1\dots m}$ & $1=\gcd(\alpha_1,\dots,\alpha_m)$ & $0$ & $1 = {m\choose 0}$ \\
\hline
$S_{j_1\dots \widehat{j_\ell} \dots j_m}$ & $\gcd(\alpha_{j_1},\dots,\widehat{\alpha_{j_\ell}},\dots,\alpha_{j_m})$ & $2$ & $m\choose 1$ \\
\hline
\vdots & \vdots & \vdots & \vdots \\
\hline
$S_j$ & $\alpha_j$ & $2m-2$ & ${m\choose m-1}$ \\
\hline
$\{0\}$ & $\infty$ & $2m$ & $1= {m\choose m}$ \\
\hline
\end{tabular}
\caption{}
\label{t:s-sets}
\end{table}

One can also organize this table as integer labels on an $(m-1)$-simplex.  Noticing that the $m$ weights appear as the stabilizers of the sets $S_j$, we place each of these weights at the vertices of the simplex.  If an edge connects two vertices labeled $\alpha_j$ and $\alpha_k$, then we attach the integer label $\gcd(\alpha_j,\alpha_k)$ to the edge.  More generally, attach the integer label $\gcd(\alpha_{i_1},\dots,\alpha_{i_{\ell+1}})$ to an $\ell$-face whose vertices have associated weights $\alpha_{i_1},\dots,\alpha_{i_{\ell+1}}$.  The interior of the simplex obtains a label of $1$ since $\gcd(\alpha_1,\dots,\alpha_m)=1$.

The collection of sets in Table~\ref{t:s-sets} partitions $\CC^m$ into invariant submanifolds, and an orbit-type stratum is exactly the union of sets above whose points share the same stabilizer.

\section{Description of the Invariant Polynomials}\label{s:invt poly}

Our first order of business is to obtain a description of the invariant polynomials for an effective linear action of $\SS^1$ on $\CC^m$ as in Equation~\eqref{e:circle action}.  A fully satisfactory description of the algebra of invariant polynomials for a general linear circle action (such as a generating set of invariant polynomials with minimal cardinality along with the relations between these polynomials) remains elusive; at least the authors are not aware of such a description in the literature.  Often a minimal generating set can be obtained for specific cases, and on a case-by-case basis the relations can be derived from the invariant polynomials using a Gr\"obner basis (\cite[Chapter 1]{sturmfels}).  Also, work has been done on determining the dimension of the subspace of invariant polynomials of a fixed degree (see \cite{HS}).  We take a different approach below, in which we give a simple condition that any invariant polynomial must satisfy.  Fix an effective linear action of $\SS^1$ on $\CC^m$.  Considering $\CC^m$ as the real vector space $\RR^{2m}$, it will be convenient to use coordinates $(z_1,\overline{z}_1,\dots,z_m,\overline{z}_m)$.  Let $p$ be a homogeneous $\CC$-valued polynomial on $\CC^m$ of degree $d$.  Let $\calK$ be the set of all $2n$-tuples $K=(k_1,\overline{k}_1,\dots,k_n,\overline{k}_n)$ such that $k_1+\overline{k}_1+\cdots+k_n+\overline{k}_n=d$.  Then, $p$ takes the form
\begin{equation}\label{e:polynomial}
p(z_1,\overline{z}_1, \dots, z_n,\overline{z}_n) = \sum_{K\in\calK}P_Kz_1^{k_1}\overline{z}_1^{\overline{k}_1}\cdots z_n^{k_n}\overline{z}_n^{\overline{k}_n}
\end{equation}
for some complex numbers $P_K$.

\begin{proposition}\label{p:invt poly}
Let $\SS^1$ act on $\CC^m$ linearly and effectively with positive weights $\alpha_1,\dots,\alpha_m$.  Then a homogeneous $\CC$-valued polynomial $p$ as in Equation~\eqref{e:polynomial} is invariant if and only if it satisfies the equation
\begin{equation}\label{e:invt poly}
\alpha_1(k_1 - \overline{k}_1)+\dots+\alpha_m(k_m-\overline{k}_m) = 0
\end{equation}
for each $K\in\calK$ such that $P_K\neq 0$.
\end{proposition}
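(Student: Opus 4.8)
The plan is to compute directly how a single monomial transforms under the action and then separate the $\theta$-dependence. Writing the action of \eqref{e:circle action} as $e^{i\theta}\cdot z_j = e^{i\theta\alpha_j}z_j$, and hence $\overline{z}_j \mapsto e^{-i\theta\alpha_j}\overline{z}_j$, the monomial $z_1^{k_1}\overline{z}_1^{\overline{k}_1}\cdots z_m^{k_m}\overline{z}_m^{\overline{k}_m}$ (which I abbreviate $z^K$) picks up exactly the scalar factor $e^{i\theta\lambda_K}$, where
$$\lambda_K := \alpha_1(k_1-\overline{k}_1)+\dots+\alpha_m(k_m-\overline{k}_m).$$
Substituting into \eqref{e:polynomial} then gives
$$p(e^{i\theta}\cdot z) = \sum_{K\in\calK} P_K\, e^{i\theta\lambda_K}\, z^K.$$
The ``if'' direction is immediate from this: if $\lambda_K=0$ whenever $P_K\neq 0$, then every surviving factor $e^{i\theta\lambda_K}$ equals $1$, so $p(e^{i\theta}\cdot z)=p(z)$ for all $\theta$, i.e.\ $p$ is invariant.

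For the converse I would exploit that the weights are integers, so each $\lambda_K\in\ZZ$ and $\theta\mapsto e^{i\theta\lambda_K}$ is a genuine character of $\SS^1$. Grouping the monomials by the value of $\lambda_K$, write $p=\sum_\lambda p_\lambda$ with $p_\lambda := \sum_{K\,:\,\lambda_K=\lambda} P_K z^K$, so that $p(e^{i\theta}\cdot z)=\sum_\lambda e^{i\theta\lambda}\,p_\lambda(z)$. Invariance says this equals $p(z)$ for every $\theta$; averaging both sides against $e^{-i\theta\mu}$ over $\theta\in[0,2\pi]$ and using $\tfrac{1}{2\pi}\int_0^{2\pi} e^{i\theta(\lambda-\mu)}\,d\theta = \delta_{\lambda\mu}$ extracts $p_\mu(z)=0$ for every $\mu\neq 0$ (the right-hand side contributes only at $\mu=0$, since $p(z)$ is constant in $\theta$).

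The final, and only genuinely delicate, step is to pass from the vanishing of the functions $p_\mu$ to the vanishing of the individual coefficients $P_K$. This rests on the standard fact that the monomials $z^K$ are linearly independent as functions on $\CC^m$ (equivalently, the change of variables $z_j=x_j+iy_j$, $\overline{z}_j=x_j-iy_j$ sends them to a basis of the complex-valued polynomials in $x_j,y_j$). Granting this, $p_\mu\equiv 0$ forces $P_K=0$ for every $K$ with $\lambda_K=\mu$; taking the contrapositive, $P_K\neq 0$ implies $\lambda_K=0$, which is precisely \eqref{e:invt poly}. I expect the main obstacle to be purely bookkeeping: performing the substitution $\overline{z}_j\mapsto e^{-i\theta\alpha_j}\overline{z}_j$ cleanly and justifying the monomial independence. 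One could also bypass the integral average entirely by invoking the linear independence of the characters $e^{i\theta\lambda}$ for distinct integers $\lambda$, which effects the same separation without any analysis.
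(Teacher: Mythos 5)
Your proof is correct, and its first half is the same computation the paper performs: the paper writes $z_j=|z_j|e^{i\psi_j}$ in polar form so that $p-(e^{i\theta})^*p$ becomes a sum of monomial terms times $\bigl(1-e^{i\theta\lambda_K}\bigr)$, and the ``if'' direction is read off exactly as you read it off. Where you diverge is the converse. The paper argues algebraically at each \emph{fixed} $\theta$: since $p$ and $(e^{i\theta})^*p$ are equal as polynomials, their coefficients agree, giving $P_K\bigl(1-e^{i\theta\lambda_K}\bigr)=0$ for every $K$ and every $\theta$; since $\theta$ is arbitrary, $P_K\neq 0$ forces $\lambda_K=0$. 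You instead first decompose $p$ into weight components $p_\lambda$ and average against the characters $e^{-i\theta\mu}$, which kills every $p_\mu$ with $\mu\neq 0$, and only then invoke linear independence of the monomials $z^K$. Both routes rest on the same two facts, just deployed in opposite order: uniqueness of coefficients in the monomial basis (which the paper uses silently in the phrase ``their polynomial coefficients are equal''), and the non-triviality of the character $\theta\mapsto e^{i\theta\lambda}$ for an integer $\lambda\neq 0$. The paper's ordering avoids integration entirely and is slightly more elementary; yours packages the converse as the standard isotypic (Fourier) decomposition of a circle representation, which generalizes more readily, for instance to torus actions. Your closing remark, that the average can be replaced by linear independence of characters, collapses your argument essentially back onto the paper's. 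Finally, the ``delicate'' step you flag --- linear independence of the $z^K$ as functions --- is equally (if implicitly) needed by the paper, so it is not an extra cost of your approach.
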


\begin{proof}
Fix a homogeneous polynomial as in Equation~\eqref{e:polynomial}.  Then $p$ takes the following form:
$$
p(z_1,\overline{z}_1, \dots, z_m,\overline{z}_m) = \sum_{K\in\calK}P_K|z_1|^{k_1+\overline{k}_1}\cdots|z_m|^{k_m+\overline{k}_m} e^{i\left(\psi_1(k_1-\overline{k}_1)+\cdots+\psi_m(k_m-\overline{k}_m)\right)}
$$
where $z_j=|z_j|e^{i\psi_j}$ for each $j$.  Applying $e^{i\theta}$ to $p$ for an arbitrary $e^{i\theta}\in\SS^1$, consider the difference $p-(e^{i\theta})^*p$:
\begin{equation}\label{e:invt poly2}
p(z_1,\dots,\overline{z}_m)-p(e^{i\theta}\cdot(z_1,\dots,\overline{z}_m))
\end{equation}
$$
=\sum_{K\in\calK}P_K|z_1|^{k_1+\overline{k}_1}\cdots|z_m|^{k_m+\overline{k}_m}e^{i\left(\psi_1(k_1-\overline{k}_1)+\cdots+\psi_m(k_m-\overline{k}_m)\right)} \left( 1- e^{i\theta\left(\alpha_1(k_1-\overline{k}_1)+\cdots+\alpha_m(k_m-\overline{k}_m)\right)}\right). 
$$
If $p$ satisfies Equation~\eqref{e:invt poly} for each $K$ such that $P_K\neq 0$, then the right-hand side of Equation~\eqref{e:invt poly2} is $0$, from which it follows that $p$ is invariant.

Conversely, assume $p$ is invariant. Then for any $e^{i\theta}\in\SS^1$ the two polynomials $p$ and $(e^{i\theta})^*p$ are equal; in particular, their polynomial coefficients are equal.  In terms of Equation~\ref{e:invt poly2}, this means that for each $K\in\calK$, 
$$P_K\left( 1- e^{i\theta\left(\alpha_1(k_1-\overline{k}_1)+\cdots+\alpha_m(k_m-\overline{k}_m)\right)}\right)=0.$$
Since $e^{i\theta}$ is arbitrary, it follows that for each $K\in\calK$, either $P_K=0$ or Equation~\ref{e:invt poly} holds.
\end{proof}

\begin{corollary}\label{c:invt poly}
Let $\SS^1$ act on $\CC^m$ linearly and effectively with positive weights $\alpha_1,\dots,\alpha_m$.  Then there exists a generating set of the invariant polynomials consisting solely of real and imaginary parts of monomials $z_1^{k_1}\overline{z_1}^{\overline{k_1}}\dots z_n^{k_n}\overline{z_n}^{\overline{k_n}}$ satisfying Equation~\ref{e:invt poly}.
\end{corollary}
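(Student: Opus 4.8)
The plan is to read the corollary off Proposition~\ref{p:invt poly} together with two elementary observations about complex conjugation. Proposition~\ref{p:invt poly} identifies the \emph{homogeneous} $\CC$-valued invariant polynomials as exactly those whose nonzero monomials satisfy Equation~\eqref{e:invt poly}. The first thing I would do is discard the homogeneity hypothesis: writing an invariant polynomial $f=\sum_d f_d$ as a sum of its homogeneous components, and using that the linear $\SS^1$-action preserves degree (so $(e^{i\theta})^*$ sends each degree-$d$ component to a degree-$d$ polynomial), uniqueness of the homogeneous decomposition forces every $f_d$ to be invariant on its own. Proposition~\ref{p:invt poly} then applies componentwise, so every invariant polynomial — real- or complex-valued — is a $\CC$-linear combination of monomials $\mu=z_1^{k_1}\overline{z}_1^{\overline{k}_1}\cdots z_m^{k_m}\overline{z}_m^{\overline{k}_m}$ satisfying Equation~\eqref{e:invt poly}, which I will call invariant monomials.

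The second step is to note that the invariant monomials are closed under conjugation, and that this is exactly what is needed to stay within the prescribed class when passing to real and imaginary parts. Conjugating $\mu$ interchanges the exponent pair $k_j\leftrightarrow\overline{k}_j$ for each $j$, hence replaces each $k_j-\overline{k}_j$ by its negative and sends the left-hand side of Equation~\eqref{e:invt poly} to its negative; since a sum vanishes precisely when its negative does, $\overline{\mu}$ is again an invariant monomial. Consequently both $\operatorname{Re}\mu=\tfrac12(\mu+\overline{\mu})$ and $\operatorname{Im}\mu=\tfrac{1}{2i}(\mu-\overline{\mu})$ are $\CC$-linear combinations of invariant polynomials, hence invariant, and they are manifestly real-valued polynomials in the real coordinates; so each is itself a real-valued invariant polynomial of the asserted form.

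It then remains to check that these real and imaginary parts generate. Given a real-valued invariant polynomial $f$, the first step expresses it as $f=\sum_\mu c_\mu\mu$ with $c_\mu\in\CC$ and $\mu$ ranging over invariant monomials. Because $f$ is real-valued, taking real parts gives $f=\operatorname{Re}f=\sum_\mu\operatorname{Re}(c_\mu\mu)=\sum_\mu\bigl((\operatorname{Re}c_\mu)\operatorname{Re}\mu-(\operatorname{Im}c_\mu)\operatorname{Im}\mu\bigr)$, which displays $f$ as an $\RR$-linear combination of the real and imaginary parts of invariant monomials. Thus the collection of all such real and imaginary parts in fact $\RR$-linearly spans the ring of invariant polynomials, which is more than enough to generate it as an algebra (and a finite generating set of this form may be extracted by Hilbert finiteness if desired). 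I expect the only genuine subtlety to be the conjugation-invariance step: verifying that the operation of taking real and imaginary parts does not lead outside the span of invariant monomials of the prescribed shape. Once that is in hand, the remaining manipulations are routine.
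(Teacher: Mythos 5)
Your proposal is correct, and it is essentially the argument the paper intends: the paper states Corollary~\ref{c:invt poly} without proof as an immediate consequence of Proposition~\ref{p:invt poly}, and your write-up supplies exactly the implicit deduction (reduction to homogeneous components, closure of the invariant monomials under conjugation, and spanning by real and imaginary parts, with finiteness via Hilbert). No gaps; the conjugation-invariance step you flag as the only subtlety is handled correctly.
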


\section{Description of the Orbit Space}\label{s:orbit space}

Let $\SS^1$ act on $\CC^m$ linearly and effectively with positive weights $\alpha_1,\dots,\alpha_m$.  The purpose of this section is to study two main features of the differential structure of the orbit space $\CC^m/\SS^1$: the link and the distinguished stratum.

Since the stabilizers of the action away from the origin are proper subgroups of $\SS^1$, we immediately have the following fact.

\begin{proposition}\label{p:orbit space}
Let $\SS^1$ act on $\CC^m$ linearly and effectively.  Then $(\CC^m\smallsetminus\{0\})/\SS^1$ is an orbifold diffeomorphic to $(\SS^{2m-1}/\SS^1)\times(0,\infty)$
\end{proposition}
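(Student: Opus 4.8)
The plan is to split the statement into its two assertions---that the quotient is an orbifold, and that it is diffeomorphic to the indicated product---and dispatch them in turn.

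For the orbifold assertion, I would verify the hypotheses of the framework of Subsection~\ref{ss:orbifolds} (see also Remark~\ref{r:orbifold}) for the restricted $\SS^1$-action on $\CC^m\smallsetminus\{0\}$. The action is linear, hence smooth; it remains effective after deleting the origin, because $S_{1\dots m}$ is open, dense, and has trivial stabilizer, so any group element acting trivially fixes a point there and must be the identity; and $\CC^m\smallsetminus\{0\}\cong\RR^{2m}\smallsetminus\{0\}$ is a connected manifold for every $m\geq 1$. The essential point, flagged before the statement, is local freeness: if some $v\neq 0$ were fixed by all of $\SS^1$, then choosing a coordinate with $z_j\neq 0$ would force $e^{i\theta\alpha_j}=1$ for all $\theta$, contradicting $\alpha_j>0$; hence every stabilizer is a \emph{proper} closed subgroup of $\SS^1$, and proper closed subgroups of $\SS^1$ are finite. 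So the action is locally free, and the quotient is an effective orbifold.

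For the diffeomorphism, the key observation is that the $\SS^1$-action is unitary and therefore preserves the Euclidean norm: $|e^{i\theta}\cdot v|=|v|$, since each factor $e^{i\theta\alpha_j}z_j$ has the same modulus as $z_j$. This lets me introduce the polar-coordinate map $\phi\colon\CC^m\smallsetminus\{0\}\to\SS^{2m-1}\times(0,\infty)$ given by $\phi(v)=(v/|v|,|v|)$, with smooth inverse $(u,r)\mapsto ru$; thus $\phi$ is a diffeomorphism of manifolds. Because the action preserves the norm, it preserves the sphere and fixes the radial coordinate, so $\phi$ is $\SS^1$-equivariant when $\SS^1$ acts on the target by its restricted action on $\SS^{2m-1}$ and trivially on $(0,\infty)$.

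It then remains to pass to orbit spaces. I would use that an $\SS^1$-equivariant diffeomorphism of manifolds descends to a diffeomorphism of the quotient differential spaces: a function on the target quotient is smooth precisely when its pullback to $\SS^{2m-1}\times(0,\infty)$ is smooth and invariant, and precomposition with $\phi$ preserves both properties, with the symmetric statement for $\phi^{-1}$. Finally, since $\SS^1$ acts trivially on the second factor, $(\SS^{2m-1}\times(0,\infty))/\SS^1$ is canonically $(\SS^{2m-1}/\SS^1)\times(0,\infty)$ as a differential space, and composing the identifications gives the claim. The step I expect to demand the most care is this last differential-space bookkeeping: confirming that the quotient structure on a product on which $\SS^1$ acts through one factor only is the product of the quotient structure with $\CIN((0,\infty))$, which is a direct but not entirely automatic unwinding of the definitions of the quotient and subspace differential structures. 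Everything else is the formal descent of an equivariant diffeomorphism.
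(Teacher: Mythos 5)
Your proposal is correct and takes essentially the same approach the paper does: the paper treats the proposition as immediate from the observation that stabilizers away from the origin are proper (hence finite) subgroups of $\SS^1$, together with the implicit norm-preserving polar decomposition $\CC^m\smallsetminus\{0\}\cong\SS^{2m-1}\times(0,\infty)$ --- precisely the two points you verify in detail. The one step you rightly flag as non-automatic, namely $(\SS^{2m-1}\times(0,\infty))/\SS^1\cong(\SS^{2m-1}/\SS^1)\times(0,\infty)$, is the same splitting-off-a-trivial-factor identification the paper itself uses (via Schwarz's theorem) in the proof of Theorem~\ref{t:linear case}, so your treatment is consistent with the paper's framework.
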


\begin{remark}\label{r:orbit space}
If the action is not effective, one no longer has an effective orbifold.  However, there remains a diffeomorphism between the two differential spaces.
\end{remark}

In the case of all positive weights, the link $\SS^{2m-1}/\SS^1$ is a well-known orbifold called a \textbf{weighted projective space} $\CC\PP(\alpha_1,\dots,\alpha_m)$.  Although typically a weighted projective space is considered with its complex structure, we discard that here and consider the corresponding differential subspace structure induced by $\CC^m/\SS^1$.  As discussed in Subsection~\ref{ss:lin circ actions}, for fixed $j$, the stabilizer at each point $(0,\dots,0,z_j,0,\dots,0)$ where $z_j\neq 0$ is $\ZZ_{\alpha_j}$.  Any $1$-dimensional orbit-type stratum in $\SS^{2m-1}$ is equal to one of these sets intersected with $\SS^{2m-1}$.  Hence any $0$-dimensional stratum of the corresponding weighted projective space has isotropy group $\ZZ_{\alpha_j}$.  Similar statements can be obtained for each higher dimensional stratum using Subsection~\ref{ss:lin circ actions}; this will be crucial in the proof of Theorem~\ref{t:main}.

The distinguished stratum is the image of the origin, the unique fixed point of the action, via the quotient map (again, assuming all weights are positive).  For general compact linear actions, the differential structure near such distinguished strata is interesting and important.  It detects invariants there that are not topological.  For example, consider $\ZZ_n$ acting on $\CC$ by rotations.  For each $n$, the orbit space is homeomorphic to the plane; however, the differential structure detects the so-called Milnor number (also known as a germ codimension) at the distinguished stratum from which the number $n$ can be recovered (see \cite[Section 5]{watts-orb} for more details).  The Milnor number makes rigorous what can be interpreted in loose terms as ``how fast'' the ``cone'' converges to its apex, without the use of any type of Riemannian metric.  For instance, if we intersect $\CC/\ZZ_n$ as a differential subspace of $\RR^3$ with a plane through the distinguished stratum containing the axis of symmetry, we obtain a curve with a singularity diffeomorphic to the graph of $y^2=x^n$ in $\RR^2$ ($x>0$).  Going back to the $\SS^1$-action, we check for similar behavior.  

Recall our notation $$S_j:=\{(0,\dots,0,z_j,0,\dots,0)\mid z_j\neq 0\}.$$

\begin{proposition}\label{p:1-dim strat}
Let $\SS^1$ act on $\CC^m$ linearly and effectively with positive weights $\alpha_1,\dots,\alpha_m$.  There exists a choice of Hilbert map $\sigma=(\sigma_1,\dots,\sigma_n)$, where the polynomials $\sigma_j$ are of the form in Corollary~\ref{c:invt poly}, such that the image of $\bigcup_jS_j$ under $\sigma$ is closed under scalar multiplication by positive real numbers, forming the non-negative parts of $m$ coordinate axes of $\RR^n$ with the $0$-dimensional stratum at the origin.  Moreover, the $0$- and $1$-dimensional orbit-type strata of $\CC^m/\SS^1$ are contained in these axes.
\end{proposition}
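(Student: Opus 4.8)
The plan is to build the Hilbert map by hand so that the functions detecting the sets $S_j$ become genuine coordinate functions. First I would invoke Corollary~\ref{c:invt poly} to fix a generating set of $P(\CC^m)^{\SS^1}$ consisting of real and imaginary parts of invariant monomials, and then enlarge it by adjoining the $m$ functions $|z_1|^2,\dots,|z_m|^2$; each of these is the real part of the invariant monomial $z_j\overline{z}_j$ (whose exponent vector satisfies Equation~\eqref{e:invt poly} trivially, as $\alpha_j(1-1)=0$), so it is of the allowed form, and adjoining functions to a generating set leaves it generating. I would declare $\sigma_1=|z_1|^2,\dots,\sigma_m=|z_m|^2$ to be the first $m$ components of $\sigma$.

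The key observation, which I would extract directly from the invariance condition, is that an invariant monomial $z_1^{k_1}\overline{z}_1^{\overline{k}_1}\cdots z_m^{k_m}\overline{z}_m^{\overline{k}_m}$ supported on a single index $j$ (that is, with $k_\ell=\overline{k}_\ell=0$ for $\ell\ne j$) must satisfy $\alpha_j(k_j-\overline{k}_j)=0$ by Equation~\eqref{e:invt poly}, and since $\alpha_j>0$ this forces $k_j=\overline{k}_j$; hence the monomial equals the power $(|z_j|^2)^{k_j}=\sigma_j^{k_j}$. Therefore any generator in the chosen set whose monomial involves only one index is already a polynomial in $\{\sigma_1,\dots,\sigma_m\}$, and I would simply delete all such generators; the remaining list still generates, and every surviving generator $\sigma_{m+1},\dots,\sigma_n$ is the real or imaginary part of a monomial involving at least two distinct indices.

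With $\sigma$ so arranged I would compute the image of each $S_j$. On $S_j$ only the $j$-th coordinate is nonzero, so $\sigma_\ell=|z_\ell|^2=0$ for $\ell\le m$ with $\ell\ne j$, while $\sigma_j=|z_j|^2$ ranges over $(0,\infty)$; moreover each surviving $\sigma_i$ with $i>m$ vanishes on $S_j$, since its monomial contains a factor in some index $\ne j$ that is zero on $S_j$. Thus $\sigma(S_j)$ is exactly the open positive $j$-th coordinate ray, and $\bigcup_j\sigma(S_j)$ is the union of the positive parts of the first $m$ axes, manifestly closed under multiplication by positive reals; adjoining the image $\sigma(0)=0$ of the fixed point (the $0$-dimensional stratum) yields the non-negative axes.

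Finally, for the ``moreover'' clause I would determine which orbit-type strata are $0$- or $1$-dimensional. The only $0$-dimensional stratum is the image of the origin, sitting at $0$. A $1$-dimensional stratum is $M_{(\ZZ_d)}/\SS^1$ for a finite isotropy group $\ZZ_d$, and since $\SS^1$ then acts with $1$-dimensional orbits, this forces $\dim M_{(\ZZ_d)}=2$. But $M_{(\ZZ_d)}$ is a union of sets $S_{j_1\dots j_k}$ of real dimension $2k$, so a $2$-dimensional $M_{(\ZZ_d)}$ can contain only sets with $k=1$, i.e.\ $M_{(\ZZ_d)}\subseteq\bigcup_j S_j$; applying $\sigma$ then places the stratum inside the axes. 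I expect the only real care needed is bookkeeping: verifying that pruning the single-index generators preserves the generating property (immediate once one knows such generators are powers of the $\sigma_j$), and that the dimension count for $M_{(\ZZ_d)}$ correctly rules out support sets of size $\ge 2$.
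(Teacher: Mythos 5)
Your proof is correct and follows essentially the same route as the paper's: the paper likewise fixes a generating set of the form in Corollary~\ref{c:invt poly} containing the polynomials $|z_i|^2$ but no higher powers of them, so that every remaining generator vanishes identically on each $S_j$. Your write-up simply makes explicit two points the paper leaves implicit, namely that any single-index invariant monomial is forced by Equation~\eqref{e:invt poly} to be a power of $|z_j|^2$ (so pruning is harmless), and the dimension count showing that $0$- and $1$-dimensional strata of the orbit space can only come from the origin and the sets $S_j$.
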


\begin{proof}
Choose the generating set $\{\sigma_1,\dots,\sigma_n\}$ to contain the polynomials $|z_i|^2$ for $i=1,\dots,m$, but no powers of these polynomials greater than $1$.   Then for each $k\in\{1,\dots,n\}$, each polynomial $\sigma_k$ when restricted to $S_j$ is identically $0$ unless it is equal to $|z_j|^2$.  The result follows.
\end{proof}

As an example, consider the case $m=2$, $\alpha_1=1$, and $\alpha_2=2$.  The orbit space is diffeomorphic to the semi-algebraic set in $\RR^4$ given by
\begin{gather*}
y_1\geq 0,\\
y_2\geq 0,\\
y_3^2 + y_4^2 = y_1^2y_2.
\end{gather*}
(See Example~\ref{x:dim2} for details.)
Intersecting this differential subspace with the plane $y_3=y_4=0$, we obtain the non-negative $y_1$- and $y_2$-axes, which together form a curve with differential structure diffeomorphic to the graph of the absolute value function.  The intersection with other planes yields different singularities, however.  For example, intersecting with $y_3=y_1-y_2=0$ yields the curve $y_4^2=y_1^3$, which has a more severe cusp.

\section{Linear $\SS^1$-Actions}\label{s:linear case}

A more sophisticated version of Theorem~\ref{t:main} is below, along with its proof.  We develop an algorithm in the proof for finding the weights of an effective linear circle action.  Examples~\ref{x:dim3} and \ref{x:dim5} illustrate this algorithm.

Let $\SS^1$ act linearly on $\RR^k$ and $\RR^\ell$, and let $\psi\colon\RR^k\to\RR^\ell$ be a smooth $\SS^1$-equivariant map.  Then $\psi$ descends to a map $\widehat{\psi}\colon\RR^k/\SS^1\to\RR^\ell/\SS^1$ making the following diagram commute:

$$\xymatrix{
\RR^k \ar[r]^{\psi} \ar[d]_{\pi_k} & \RR^\ell \ar[d]^{\pi_\ell} \\
\RR^k/\SS^1 \ar[r]_{\widehat{\psi}} & \RR^\ell/\SS^1.\\
}$$

Note that $\widehat{\psi}$ is smooth.  Indeed, let $f\in\CIN(\RR^\ell/\SS^1)$.  It is sufficient to show that $(\widehat{\psi}\circ\pi_k)^*f\in\CIN(\RR^k)$.  But $(\widehat{\psi}\circ\pi_k)^*f = (\pi_\ell\circ\psi)^*f$.  Since $\pi_\ell$ and $\psi$ are smooth, we conclude that $(\widehat{\psi}\circ\pi_k)^*f\in\CIN(\RR^k)$.  This shows that smooth equivariant maps between $\SS^1$-representations are sent to smooth maps between orbit spaces.  In fact, this is a functor to differential spaces.  Theorem~\ref{t:linear case} states that this functor is essentially injective.  We say that a functor $F$ is \textbf{essentially injective} if given objects $c_1$ and $c_2$ in its domain category, $F(c_1)\cong F(c_2)$ implies $c_1\cong c_2$; that is, it is injective on objects up to isomorphism.

\begin{theorem}[Linear Circle Actions]\label{t:linear case}
Let $\mathcal{C}$ be the category of all effective linear actions of $\SS^1$ on finite-dimensional real vector spaces with smooth $\SS^1$-equivariant maps between them.  Then the functor from $\mathcal{C}$ to differential spaces sending such an $\SS^1$-action on a vector space $V$ to the differential space $(V/\SS^1,\CIN(V/\SS^1))$, and sending smooth $\SS^1$-equivariant maps to smooth maps between orbit spaces, is essentially injective on objects.
\end{theorem}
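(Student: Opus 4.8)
The plan is to show that the diffeomorphism class of $(V/\SS^1,\CIN(V/\SS^1))$ determines the two data that classify an object of $\mathcal{C}$ up to equivariant linear isomorphism: the dimension of the trivial summand and the multiset of weights. By the normal form in Subsection~\ref{ss:lin circ actions} I may assume $V\cong\RR^{n-2m}\times\CC^m$ with $\SS^1$ acting trivially on $\RR^{n-2m}$ and with positive weights $\alpha_1,\dots,\alpha_m$ on $\CC^m$, where $\gcd(\alpha_1,\dots,\alpha_m)=1$. Two such actions are equivariantly linearly isomorphic precisely when they share the same $n-2m$ and the same multiset $\{\alpha_1,\dots,\alpha_m\}$, and such an isomorphism is in particular an isomorphism in $\mathcal{C}$. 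Hence it suffices to reconstruct $n-2m$ and $\{\alpha_1,\dots,\alpha_m\}$ from invariants of the differential space alone; any diffeomorphism of orbit spaces preserves these invariants, giving essential injectivity.

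First I would recover the integers $n$ and $m$. Since the orbit space splits as $\RR^{n-2m}\times(\CC^m/\SS^1)$ and each orbit-type stratum is a product of $\RR^{n-2m}$ with a stratum of $\CC^m/\SS^1$, the principal stratum has dimension $(n-2m)+(2m-1)=n-1$, recovering $n$. By Theorem~\ref{t:ot strat} the orbit-type stratification is an invariant of $\CIN(V/\SS^1)$; the stratum coming from the apex of $\CC^m/\SS^1$ (the image of the unique fixed point) is $\RR^{n-2m}\times\{\text{apex}\}$, of dimension $n-2m$. Every other stratum of $\CC^m/\SS^1$ is the image of some $S_{j_1\dots j_k}$ with $k\geq 1$ and hence has dimension $2k-1\geq 1$, so the fixed-point stratum is the \emph{unique} stratum of minimal dimension. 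Thus $n-2m$ is determined, and with $n$ in hand so is $m$.

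Next I would recover the weights. On the complement of the fixed-point stratum, Proposition~\ref{p:orbit space} shows the orbit space is the orbifold $\RR^{n-2m}\times\CC\PP(\alpha_1,\dots,\alpha_m)\times(0,\infty)$, so Corollary~\ref{c:orbifold} determines, at each orbit-type stratum, the isotropy group up to isomorphism; as every isotropy group here is cyclic, this yields its order $e$. For each integer $d\geq 1$ set $m_d:=\#\{j : d\mid\alpha_j\}$. The fixed-point set of $\ZZ_d\subseteq\SS^1$ in $\CC^m$ is the coordinate subspace $\{z : z_j=0 \text{ whenever } d\nmid\alpha_j\}\cong\CC^{m_d}$, and a stratum carries isotropy of order divisible by $d$ exactly when it lies in the image of this subspace. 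Taking the closure of the union of all strata whose isotropy order is divisible by $d$ therefore yields $\RR^{n-2m}\times(\CC^{m_d}/\SS^1)$, of dimension $(n-2m)+(2m_d-1)$; since $n-2m$ is already known, this recovers $m_d$. Finally, because $m_d=\sum_{k\geq 1}c_{dk}$ where $c_v:=\#\{j:\alpha_j=v\}$, M\"obius inversion over divisibility gives $c_v=\sum_{k\geq 1}\mu(k)\,m_{vk}$ (a finite sum), so the full multiset of weights is determined.

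The hard part will be the third step, reconstructing the weights with their correct multiplicities. Reading weights off isolated singular points does not work once weights coincide or share common factors: for instance the weights $(2,2,1)$ and $(2,1,1)$ both produce only $\ZZ_2$ as a nontrivial isotropy group, and they are distinguished not by which groups occur but by the \emph{dimension} of the locus carrying that group. The crux is therefore to justify that the closure of the union of strata with isotropy order divisible by $d$ is exactly the image of the coordinate subspace $\CC^{m_d}$, so that its dimension genuinely computes $m_d$, and to confirm that Corollary~\ref{c:orbifold} applies uniformly across the punctured orbit space to supply the isotropy orders; the M\"obius inversion is then a formal, and finite, bookkeeping step.
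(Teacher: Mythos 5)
Your proposal is correct, and it reaches the weights by a genuinely different route than the paper. Both arguments share the same skeleton: reduce to the normal form $\RR^{n-2m}\times\CC^m$ with positive weights of gcd $1$, recover $n$ from the generic (topological) dimension of the orbit space, recover $n-2m$ as the dimension of the unique minimal stratum via Theorem~\ref{t:ot strat}, and then combine Proposition~\ref{p:orbit space} with Corollary~\ref{c:orbifold} to read off the (cyclic) isotropy orders on the strata of the punctured orbit space. Where the paper then runs a recursive algorithm over the poset of strata ordered by closure and organized by depth --- matching each stratum with a face of the $(m-1)$-simplex of Subsection~\ref{ss:lin circ actions} and assigning weights to vertices inductively, starting from strata of maximal depth --- you instead count, for each $d\geq 1$, the number $m_d$ of weights divisible by $d$: since the stabilizer of $z$ is $\ZZ_{\gcd\{\alpha_j\,:\,z_j\neq 0\}}$ (Table~\ref{t:s-sets}), and a finite cyclic subgroup of $\SS^1$ has order divisible by $d$ exactly when it contains $\ZZ_d$, the union of strata with isotropy order divisible by $d$ is precisely the image of $\bigl\{z : z_j=0 \text{ whenever } d\nmid\alpha_j\bigr\}\smallsetminus\{0\}\cong\CC^{m_d}\smallsetminus\{0\}$, so its closure has dimension $(n-2m)+2m_d-1$ (and is empty when $m_d=0$), which determines $m_d$; the M\"obius inversion $c_v=\sum_{k\geq 1}\mu(k)m_{vk}$ then recovers the multiplicity $c_v$ of each weight value $v$. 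The two points you flag as needing care are indeed fine and follow directly from the paper's own results, so there is no gap. Comparing the approaches: the paper's induction is constructive and localizes each weight to a specific stratum (vertex of the simplex), which is what its Examples~\ref{x:dim3} and~\ref{x:dim5} illustrate; your method trades that combinatorial bookkeeping for a closed-form count that handles repeated and mutually divisible weights uniformly (exactly the difficulty your $(2,2,1)$ versus $(2,1,1)$ example isolates). As a sanity check, your recipe reproduces the paper's Example~\ref{x:dim5} (weights $2,2,3,4,6$): one finds $m_1=5$, $m_2=4$, $m_3=2$, $m_4=m_6=1$, all other $m_d=0$, whence $c_2=2$, $c_3=c_4=c_6=1$, and $c_1=0$, as expected.
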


\begin{remark}\labell{r:linear case}
To obtain Theorem~\ref{t:main} from Theorem~\ref{t:linear case}, we need to show that if there is an $\SS^1$-equivariant diffeomorphism $\varphi$ between two $\SS^1$-representations in $\mathcal{C}$, then there is also an $\SS^1$-equivariant linear isomorphism between them.  This follows from the fact that the actions of $\SS^1$ are linear, and so we may identify any such representation with its tangent space at a fixed point with the induced action.  Since $\varphi$ maps the origin to another fixed point, the differential at the origin $d\varphi|_0$ satisfies what is required.
\end{remark}

\begin{proof}
Let $V$ be an $\SS^1$-representation.  Let $\dim V=n$, and identify $V$ with $\RR^n$.  As mentioned previously, the action of $\SS^1$ on $\RR^n$ will always be isomorphic to an $\SS^1$-action on the product $\RR^{n-2m}\times\CC^m$, where $\SS^1$ acts on $\RR^{n-2m}$ trivially, and on $\CC^m$ by Equation~\eqref{e:circle action} such that the weights $\alpha_j$ are positive and $\gcd(\alpha_1,\dots,\alpha_m)=1$.
To complete the proof, we need to obtain the integers $\alpha_1,\dots,\alpha_m$, as well as the dimension $n-2m$ of the trivial representation, from $\CIN((\RR^{n-2m}\times\CC^m)/\SS^1)$.

The quotient topology on $(\RR^{n-2m}\times\CC^m)/\SS^1$ is equal to the initial topology induced by $\CIN((\RR^{n-2m}\times\CC^m)/\SS^1)$ by Lemma~\ref{l:topologies}.  The dimension of $(\RR^{n-2m}\times\CC^m)/\SS^1$, which is $n-1$, is a topological invariant: it is the topological dimension at generic points of the space.  So, the differential structure identifies that the dimension of the $\SS^1$-representation is $n$.

Since $\SS^1$ acts trivially on $\RR^{n-2m}$, the coordinate functions on this factor can be chosen as generators in a generating set of invariant polynomials on $\RR^{n-2m}\times\CC^m$.  It follows that $(\RR^{n-2m}\times\CC^m)/\SS^1$ is diffeomorphic to $\RR^{n-2m}\times(\CC^m/\SS^1)$, and we shall identify the two spaces.

By Theorem~\ref{t:ot strat}, the orbit-type stratification on $\RR^{n-2m}\times\CC^m/\SS^1$ can be obtained from $\CIN(\RR^{n-2m}\times\CC^m/\SS^1)$.  In particular, since $\CC^m$ contains a unique $\SS^1$-fixed point, the orbit space $\RR^{n-2m}\times\CC^m/\SS^1$ has a unique stratum $A$ of minimal dimensional equal to $n-2m$.  It follows that the differential structure identifies the dimension of the trivial representation of $\SS^1$ on $\RR^{n-2m}$.  We now only need to find the weights $\alpha_1,\dots,\alpha_m$ to complete the proof.  Note that we know ahead of time that there are $m$ such weights, as we know the orbit space came from a linear circle action on $\RR^n$ with a trivial factor $\RR^{n-2m}$ of maximal dimension: we can derive $m$ from the two numbers $n$ and $n-2m$.

Removing $A$ from $\RR^{n-2m}\times\CC^m/\SS^1$, we are left with an orbit space of a  locally free $\SS^1$-action, \emph{i.e.\ }an orbifold, by Proposition~\ref{p:orbit space}.  By Corollary~\ref{c:orbifold}, the orders of the isotropy groups at each stratum of this orbit space can be obtained from its differential structure, and hence from $\CIN(\RR^{n-2m}\times\CC^m/\SS^1)$.  Since these finite orders will correspond to subgroups of $\SS^1$, we conclude that those isotropy groups are cyclic groups of the obtained orders.

We claim that there is a natural way to pick out the weights from the orders of these isotropy groups.  That the weights appear at all among these orders is clear: the weight $\alpha_j$ is the order of the stabilizer of all points in
$$S_j = \{(0,\dots,0,z_j,0,\dots,0)\mid z_j\neq 0\}\subseteq\CC^m.$$

The weights completely determine the orbit-type stratification of $\CC^m$ (see Subsection~\ref{ss:lin circ actions}), and hence of $\RR^{n-2m}\times\CC^m$, and its orbit space.  In fact, the orbit-type strata on the orbit space will be unions of images of the sets in Table~\ref{t:s-sets} via the quotient map, and so we can also use the simplex to organize the strata of the orbit space.  Since we know ahead of time that the orbit space is the result of an effective linear circle action, we take advantage of this knowledge and now produce an algorithm on the orbit space starting with the differential structure $\CIN(\RR^{n-2m}\times\CC^m/\SS^1)$ which obtains the weights.

Remove $A$ from $\RR^{n-2m}\times\CC^m/\SS^1$, and denote the collection of the remaining strata by $\mathfrak{S}$.  Equip $\mathfrak{S}$ with a partial ordering $\preceq$, defined by: $S\preceq T$ if $\overline{T}\cap S\neq\emptyset$, in which case $S\subseteq\overline{T}$.  Note that $\mathfrak{S}$ is finite, with open and dense stratum $\calO$ as the top stratum, meaning it is maximal with respect to $\preceq$.  Also, if $S\preceq T$, then either $S=T$ or $\dim(S) < \dim(T)$.  The \textbf{depth} of a stratum $S$ is the number of distinct strata in a maximal chain with $S$ as its minimal element:\\
$$ \depth(S) := \sup\{\nu\mid S=S_0\prec S_1\prec \cdots \prec S_\nu=\calO\}.$$

Start with an element $\calR$ of $(\mathfrak{S},\preceq)$ of maximum depth, and denote its codimension by $2r$; this will be a union of images of sets from Table~\ref{t:s-sets} via the quotient map.  $\calR$ is represented by an $(m-1-r)$-face in the simplex mentioned above; equivalently, it is the union of the image via the quotient map of a codimension-$(2r)$ set in Table~\ref{t:s-sets} along with some lower dimensional such images in its boundary.  In particular, we know $m-r$ vertices are contained in this face; as $\calR$ is minimal with respect to $\preceq$ we know that the $m-r$ corresponding images of sets $S_j$ via the quotient map are contained in $\calR$.  Since the isotropy groups at all points of $\calR$ share the same order, and as mentioned above we know what this order is, we conclude that we know what the order is at the $m-r$ vertices.  So we have obtained $m-r$ weights.
Repeat this step for all strata with the same depth as $\calR$, keeping in mind that while a vertex may appear more than once when applying this step to different strata, its associated weight should only be recorded once.

We continue recursively.  Fix a depth $D$.  Suppose for each stratum $\calQ$ of depth greater than $D$ and for each vertex contained in the face associated to $\calQ$, the associated weight is known.  Let $\calP$ be a stratum of depth $D$ and codimension $2p$, which is represented by an $(m-1-p)$-face in the simplex.  Consequently, this face contains $m-p$ vertices.  Each stratum in $\overline{\calP}\smallsetminus\calP$ has depth greater than $D$, and by assumption, we know the weights associated to the vertices in their corresponding faces.  If the total number of these vertices is not $m-p$, then the remaining vertices must be associated to the stratum $\calP$ itself, and we obtain the order of the corresponding isotropy groups, which is constant at all points of $\calP$.  Repeat this for all strata of depth $D$.  The result is that for each stratum of depth at least $D$, and for each vertex contained in the associated faces, the associated weights are known.

Applying this procedure to all strata of incrementally decreasing depth, we eventually reach the top stratum $\calO$.  If we have not obtained $m$ weights at this point, we apply the above argument one more time to obtain the remaining weights, all equal to $1$.

Since the algorithm above considers every possible vertex in the simplex (equivalently every set $S_j$ in Table~\ref{t:s-sets}), it is guaranteed to produce $m$ weights.  We now have enough information to reconstruct the $\SS^1$-action on $\RR^n$.
\end{proof}

\section{Examples}\label{s:examples}

\begin{example}[$\SS^1\circlearrowright\CC$]\label{x:dim1}
Consider the action of $\SS^1$ on $\CC$, given by $e^{i\theta}\cdot z=e^{i\theta}z$.  It follows from Proposition~\ref{p:invt poly} that $|z|^2$ generates all invariant polynomials.  Thus, the orbit space is identified with the closed interval $[0,\infty)\subset\RR$.  The orbit-type strata in $\CC$ are the origin $\{0\}$ and its complement $\CC\smallsetminus\{0\}$.  
\eoe
\end{example}

\begin{example}[$\SS^1\circlearrowright\CC^2$]\label{x:dim2}
We compute a generating set of invariant polynomials with their relations, as well as the stabilizer groups, of $\SS^1\circlearrowright \CC^2$ with weights $\alpha_1$ and $\alpha_2$. We will assume that the action is effective, and so $\gcd(\alpha_1,\alpha_2)=1$.  The invariant polynomials can be obtained using Corollary~\ref{c:invt poly}.
\begin{align*}
p_1(z_1,z_2) &= |z_1|^2\\
p_2(z_1,z_2) &= |z_2|^2\\
p_3(z_1,z_2) &= \mathfrak{Re}(z_1^{\alpha_2}\overline{z}_2^{\alpha_1})\\
p_4(z_1,z_2) &= \mathfrak{Im}(z_1^{\alpha_2}\overline{z}_2^{\alpha_1})
\end{align*}

The relations can be verified using a Gr\"obner basis \cite[Chapter 1]{sturmfels} and \cite{PS} .
\begin{align*}
p_1&\geq 0\\
p_2&\geq 0\\
p_3^2+p_4^2&=p_1^{\alpha_2}p_2^{\alpha_1}
\end{align*}

The stabilizer groups can be computed directly:
\begin{align*}
\Stab(0,0)&=\SS^1, \\
\Stab(z_1,0)&=\ZZ_{\alpha_1} && \text{($z_1\neq 0$)},\\
\Stab(0,z_2)&=\ZZ_{\alpha_2} && \text{($z_2\neq 0)$},\\
\Stab(z_1,z_2)&=\{1\} && \text{elsewhere}. \\
\end{align*}
Here, in the case that $\alpha_1$ or $\alpha_2$ is $1$, we define $\ZZ_1$ to be the trivial group.
\eoe
\end{example}

\begin{example}[$\SS^1\circlearrowright\CC^3$]\label{x:dim3}
We illustrate the algorithm used in the proof of Theorem~\ref{t:linear case} for a simple example. Consider $\SS^1$ acting on $\CC^3$ linearly and effectively with weights $1$, $2$, and $3$.  

We find the orbit-type strata of the orbit space by constructing a table similar to Table~\ref{t:s-sets}; we do so in Table~\ref{t:dim3}.  The orbit-type strata are the distinguished stratum, the open dense stratum
$$\calO = \pi(S_{123}\cup S_{12} \cup S_{13} \cup S_{23} \cup S_1)$$
and two $1$-dimensional strata $\calP_1 = \pi(S_2)$ and $\calP_2 = \pi(S_3)$, with associated orders of isotropy groups $2$ and $3$, respectively.
\begin{center}
\begin{table}
\begin{tabular}{|c|c|c|}
\hline
\underline{Set} & \underline{Order of Stabilizer} & \underline{Codimension} \\
\hline
$S_{123}$ & $1$ & $0$ \\
\hline
$S_{12}$ & $1$ & $2$ \\
\hline
$S_{13}$ & $1$ & $2$  \\
\hline
$S_{23}$ & $1$ & $2$  \\
\hline
$S_{1}$ & $1$ & $4$  \\
\hline
$S_{2}$ & $2$ & $4$  \\
\hline
$S_{3}$ & $3$ & $4$  \\
\hline
\end{tabular}
\caption{}
\label{t:dim3}
\end{table}
\end{center}

The Hasse diagram for the partial order $\preceq$ as introduced in the proof of Theorem~\ref{t:linear case} is as follows.
$$\xymatrix{
 & \calO \ar@{-}[dl] \ar@{-}[dr] & \\
\calP_1 & & \calP_2 \\
}$$
Stratum $\calP_1$ has codimension $4$, and so corresponds to a vertex of the simplex described in Subsection~\ref{ss:lin circ actions} (or equivalently a set $S_j$ in Table~\ref{t:dim3}); it has associated order $2$, which is one of the weights.  Similarly $\calP_2$ has associated order $3$, another weight.  We are expecting $3$ weights in total, and so the remaining weight must be the order associated to $\calO$, which is $1$.
\eoe
\end{example}

\begin{example}[$\SS^1\circlearrowright\CC^5$]\label{x:dim5}
We illustrate the algorithm used in the proof of Theorem~\ref{t:linear case} for a more complicated action.  Let $\SS^1$ act linearly and effectively on $\CC^5$ with weights $2$, $2$, $3$, $4$, and $6$.

To find the orbit-type strata of the orbit space, we could construct a table similar to Table~\ref{t:s-sets}; we do not to save space.  The resulting strata are, besides the distinguished stratum:
\begin{align*}
\calO =&~ \pi(S_{12345}\cup S_{1234}\cup S_{1235}\cup S_{1345}\cup S_{2345}\cup S_{123}\cup S_{134}\\
&~ \cup S_{135}\cup S_{234}\cup S_{235}\cup S_{345}\cup S_{13}\cup S_{23}\cup S_{34})\\
\calP =&~ \pi(S_{1245}\cup S_{124}\cup S_{125}\cup S_{145}\cup S_{245}\cup S_{12}\cup S_{14}\\
&~ \cup S_{15}\cup S_{24}\cup S_{25}\cup S_{45}\cup S_1\cup S_2)\\
\calQ =&~ \pi(S_{35}\cup S_3)\\
\calR_1 =&~ \pi(S_4)\\
\calR_2 =&~ \pi(S_5)
\end{align*}

The associated orders of isotropy groups are:
$$\calO\!:1\quad \calP\!:2\quad \calQ\!:3\quad \calR_1\!:4\quad \calR_2\!:6$$

The Hasse diagram for the partial order $\preceq$ used in the proof of Theorem~\ref{t:linear case} is:
$$\xymatrix{
 & & & \calO \ar@{-}[dll] \ar@{-}[dr] & \\
 & \calP \ar@{-}[dl] \ar@{-}[dr] & & & \calQ \ar@{-}[dll] \\
\calR_1 & & \calR_2 & & \\
}$$
Stratum $\calR_1$ has codimension $8$, and so corresponds to a vertex of the simplex described in Subsection~\ref{ss:lin circ actions} (or equivalently a set $S_j$ in Table~\ref{t:s-sets}); it has associated order $4$, which is one of the weights.  Similarly, stratum $\calR_2$ also yields a weight; namely, $6$.  $\calP$ has codimension $2$, and therefore it corresponds to a $3$-face in the simplex (equivalently, a set $S_{j_1j_2j_3j_4}$ in Table~\ref{t:s-sets}), and so its closure contains $4$ vertices.  Two of the weights have been found, and so the other two must be associated to $\calP$ itself, which has order $2$.  We now have weights $2$, $2$, $4$, and $6$.  $\calQ$ has codimension $6$, and so corresponds to an edge in the simplex (equivalently, a set $S_{j_1j_2}$).  There are two vertices in its closure, one of which corresponds to $\calR_2$.  So the other weight must be the order associated to $\calQ$, which is $3$.  Since we now have five weights, we are done.
\eoe
\end{example}


\end{document}